\theoremstyle{plain}
\newtheorem{thm}{Theorem}[section]
\newtheorem{prop}[thm]{Proposition}
\newtheorem{cor}[thm]{Corollary}
\newtheorem*{assumption}{Assumption}
\theoremstyle{definition}
\newtheorem{defn}[thm]{Definition}
\newtheorem{exmp}[thm]{Example}
\newtheorem{rem}[thm]{Remark}
\numberwithin{equation}{section}
\DeclareMathOperator{\Aut}{Aut}
\DeclareMathOperator{\Hom}{Hom}
\DeclareMathOperator{\Homeo}{Homeo}
\DeclareMathOperator{\id}{id}
\DeclareMathOperator{\supp}{Supp}
\newcommand{\Z}{\mathbb{Z}}
\newcommand{\R}{\mathbb{R}}
\newcommand{\SkewRing}{A \star_\alpha G}
\begin{document}

\title[Outer Partial Actions and Partial Skew Group Rings]{Outer Partial Actions and Partial Skew Group Rings}

\author{Patrik Nystedt}
\address{University West, Department of Engineering Science, SE-46186
  Trollh\"{a}ttan, Sweden}
\email{Patrik.Nystedt@hv.se}

\author{Johan \"{O}inert}
\address{Centre for Mathematical Sciences, P.O. Box 118, Lund University, SE-22100 Lund, Sweden}
\email{Johan.Oinert@math.lth.se}

\thanks{The second author was partially supported by The Swedish Research Council (repatriation grant no. 2012-6113).}

\subjclass[2010]{Primary 16S35, 16W50, 37B05, Secondary 16D25, 37B99, 54H15, 54H20}
\keywords{outer action; partial action; minimality; topological dynamics; partial skew group ring; simplicity.}

\begin{abstract}
We extend the classicial notion of an outer action 
$\alpha$ of a group $G$ on a unital ring $A$
to the case when $\alpha$ is a partial action 
on ideals, all of which have local units.
We show that if $\alpha$ is an outer partial
action of an abelian group $G$,
then its associated partial skew group
ring $\SkewRing$ is simple if and only if 
$A$ is $G$-simple.
This result is applied to partial skew group rings associated with two different types of partial dynamical systems.
\end{abstract}

\maketitle

\pagestyle{headings}

\section{Introduction}

The notion of a partial action of a group on a C*-algebra, and the construction of its
associated crossed product C*-algebra, was introduced by R. Exel
\cite{Exel94,Exel98}
for partial actions of the integers and then 
extended by K. McClanahan \cite{Mcclanahan95} to partial actions of  discrete groups.
Since then, the theory of (twisted) partial actions on C*-algebras has
developed into a rich theory which has become an important tool
in the study of C*-algebras.
It is now known that several
important classes of C*-algebras
can be realized as crossed product C*-algebras by (twisted) partial actions,
e.g. AF-algebras \cite{Exel95}, Bunce-Deddens algebras \cite{Exel94BD}, Cuntz-Krieger algebras \cite{ExelLaca99}
and Cuntz-Li algebras \cite{BoavaExel13}.

In a purely algebraic context, partial skew group rings were 
introduced by M. Dokuchaev and R. Exel \cite{DokuchaevExel05}
as a generalization of classical skew group rings
and as an algebraic analogue of partial crossed product C*-algebras.
Compared to the abundance of results in the context of skew group rings or partial crossed product C*-algebras,
the theory of partial skew group rings is still underdeveloped.
In particular, apart from
the results in \cite{AvilaFerrero11,BeuterGoncalves,Goncalves13,GoncalvesOinertRoyer}, 
very little is known about the ideal structure and simplicity criteria
for partial skew group rings.

The primary goal of the present article is to establish a generalization
(see Theorem \ref{generalizationcrow}) of a result due to K. Crow \cite{crow2005}
(see Theorem \ref{crow})
concerning a connection between outer actions and simplicity
of unital skew group rings, 
to partial skew group rings which have local units.
The secondary goal is to apply this result to show generalizations
(see Theorem \ref{generalizationgoncalvesfinite} and Theorem \ref{generalizationgoncalves})
of recent results by D. Gon\c{c}alves \cite{Goncalves13}
concerning partial skew group rings associated with two different types of partial dynamical systems.

Before we describe these results, we first need to recall the following notions.
Let $G$ be a group with identity element $e$ and let $X$ be a set.
A \emph{partial action} $\alpha$
of $G$ on $X$ is a collection of subsets $\{ X_g \}_{g \in G}$
of $X$ and a collection of bijections
$\alpha_g : X_{g^{-1}} \to X_g$, for $g\in G$, such that
for all $g,h \in G$ and every $x \in X_{h^{-1}} \cap X_{(gh)^{-1}}$,
the following three relations hold:
\begin{itemize}
\item[(i)] $\alpha_e = \id_X$; 
\item[(ii)] $\alpha_g(X_{g^{-1}} \cap X_h) = X_g \cap X_{gh}$; 
\item[(iii)] $\alpha_g(\alpha_h(x)) = \alpha_{gh}(x)$.
\end{itemize}
It often happens that the set $X$ carries an additional structure.
By requiring that the subsets $\{X_g\}_{g\in G}$ and the bijections $\{\alpha_g\}_{g\in G}$
are compatible with the given structure on $X$, we may define a partial action of a certain type.
If $X$ is a topological space,
then we require that, for each $g\in G$, $X_g$ is an open
set and $\alpha_g$ is a homeomorphism.
If $X$ is a semigroup (ring, algebra), then we require that, for each $g \in G$, the
subset $X_g$ is an ideal of $X$ and the map 
$\alpha_g$ is a semigroup (ring, algebra) isomorphism.
A subset $I$ of $X$ is called
\emph{$G$-invariant} if, for each $g \in G$,
the inclusion $\alpha_g(I \cap D_{g^{-1}}) \subseteq I$ holds.
In case $X$ is a semigroup (ring, algebra),
we say that $X$ is \emph{$G$-simple}
if there is no $G$-invariant ideal of $X$ other than $X$ itself and $\{ 0 \}$ (which need not exist).
The action $\alpha$ is called {\it global} if
the equality $X_g = X$ holds for each $g\in G$.

As a preparation for K. Crow's result below, we shall
now recall a couple of important notions from the \emph{classical setting}, i.e.
when $X$ is a unital ring (algebra)
and $\alpha : G \ni g \mapsto \alpha_g \in \Aut(X)$
is a global action of $G$ on $X$.
If $g \in G$, then the map $\alpha_g$ is said to be
\emph{inner} if there is an invertible $a \in X$
such that the relation $\alpha_g(x) = a^{-1} x a$
holds for all $x \in X$.
The action $\alpha$ is said to be
\emph{outer} if the identity element $e$ is the only element of
$G$ that maps to an inner automorphism of $X$.

\begin{thm}[Crow \cite{crow2005}]\label{crow}
If $\alpha : G \rightarrow \Aut(A)$ is an outer action (in the classical sense)
of an abelian group $G$ on a unital ring $A$,
then the associated skew group ring $A *_{\alpha} G$
is simple if and only if $A$ is $G$-simple.
\end{thm}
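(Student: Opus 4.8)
The plan is to prove the two implications separately, using outerness and commutativity of $G$ only in the forward direction.

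For ``$A *_\alpha G$ simple implies $A$ is $G$-simple'' I would argue contrapositively. Given a $G$-invariant ideal $I$ of $A$ with $\{0\} \neq I \neq A$, I would set $I *_\alpha G = \{ \sum_g a_g u_g : a_g \in I \}$ and verify, using $\alpha_g(I) \subseteq I$, that it is a two-sided ideal of $A *_\alpha G$; it is nonzero and proper since $1 \cdot u_e \notin I *_\alpha G$. This step is purely formal and uses neither outerness nor commutativity of $G$.

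The content is in the converse. Writing $R = A *_\alpha G$ and identifying $A$ with $A u_e \subseteq R$, I would first reduce everything to the claim that every nonzero ideal $J$ of $R$ meets $A$ nontrivially. Granting this, $J \cap A$ is an ideal of $A$ which is $G$-invariant, because $u_g a u_{g^{-1}} = \alpha_g(a) \in J \cap A$ for each $a \in J \cap A$; hence $G$-simplicity forces $J \cap A = A$, so $1 \in J$ and $J = R$. To prove the claim I would run a minimal-support argument: choose a nonzero $x = \sum_{g \in S} a_g u_g \in J$ with $|S|$ minimal and, after right multiplication by a suitable unit $u_{s^{-1}}$, arrange $e \in S$. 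For $b \in A$ the commutator $bx - xb \in J$ has support inside $S$, and whenever its $e$-component $b a_e - a_e b$ vanishes, minimality forces $bx - xb = 0$ and hence the intertwining relations $b a_g = a_g \alpha_g(b)$. Commutativity of $G$ is what makes this work: conjugation by $u_g$ sends $a_k u_k$ to $\alpha_g(a_k) u_{gkg^{-1}} = \alpha_g(a_k) u_k$, so it respects the $G$-grading and keeps the ideals built from the coefficients $G$-invariant.

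When $A$ is commutative this closes immediately: the relation becomes $a_g(b - \alpha_g(b)) = 0$ for all $b$, so $a_g$ annihilates the ideal generated by $\{ b - \alpha_g(b) : b \in A \}$, which is $G$-invariant (again by commutativity of $G$) and nonzero precisely when $\alpha_g \neq \id$; $G$-simplicity then makes it equal to $A$, forcing $a_g = 0$ for $g \neq e$, so $S = \{e\}$ and $x \in J \cap A$. The hard part will be this final step for noncommutative $A$, where ``inner'' is a genuine condition and the commutator trick only yields intertwining for $b$ in the centralizer of $a_e$. I would isolate it as a lemma: if $g \neq e$ and $\alpha_g$ is outer, then no nonzero $c \in A$ satisfies $bc = c \alpha_g(b)$ for all $b$. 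Its proof hinges on $G$-simplicity; when $A$ happens to be simple one checks directly that $Ac = cA = A$, making $c$ a unit and $\alpha_g(b) = c^{-1} b c$ inner, a contradiction. For general $G$-simple $A$ the obstacle is that the two-sided ideal generated by $c$ need not itself be $G$-invariant, only its $\alpha$-orbit is; overcoming this --- by exploiting that a unital $G$-simple ring is semiprime and arguing in the spirit of the theory of $X$-outer actions, and by using the $e$-component projection of $J$ to arrange that the leading coefficient behaves like a unit so the intertwining holds for all $b$ --- is the delicate, bookkeeping-heavy core of the argument.
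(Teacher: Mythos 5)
Your forward direction is fine, and the overall skeleton of your converse (reduce to $J \cap A \neq \{0\}$, pick an element of minimal support, normalize so $e$ lies in the support, and run a commutator argument — all of which needs $G$ abelian) is also the skeleton of the argument in this paper; note that the paper does not reprove Crow's theorem directly, but deduces it as the classical special case (via Remark \ref{coincides}) of Theorem \ref{generalizationcrow}, whose proof runs through Theorem \ref{gather}. The genuine gap is exactly where you flag it: your key lemma, that outerness of $\alpha_g$ forbids any nonzero $c \in A$ with $bc = c\,\alpha_g(b)$ for all $b \in A$, is proved only when $A$ is simple, and it is \emph{not} a consequence of non-innerness of the single automorphism $\alpha_g$. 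Such a $c$ is precisely what makes $\alpha_g$ X-inner, and X-inner yet outer automorphisms exist: in the quantum plane $A = k\langle x,y\rangle/(yx-qxy)$ with $q$ not a root of unity, the automorphism $\sigma(x)=q^{-1}x$, $\sigma(y)=y$ satisfies $b\,y = y\,\sigma(b)$ for all $b$ (check it on the generators; the set of such $b$ is a subalgebra), yet $\sigma$ is outer because the only units of $A$ are scalars. So any correct proof of your lemma must use $G$-simplicity and outerness of the whole action in an essential way, and that is exactly the part you defer to "the delicate, bookkeeping-heavy core". As written, the theorem is not proved.

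The paper's proof shows that no such lemma is needed, and the missing ingredient is a sharper normalization rather than a theory of X-outer actions. In the proof of Theorem \ref{gather}, after choosing $r$ in the ideal with $|\supp(r)|$ minimal and $r_e \neq 0$, one forms $J' = \{ s_e \mid s \in RrR,\ \supp(s) \subseteq \supp(r)\}$, an ideal of the degree-$e$ component; graded simplicity (equivalent, by Proposition \ref{SimpleImpliesGSimple}, to $G$-simplicity of $A$) together with abelianness of $G$ lets one replace $r$ by an element of the same ideal, with support still contained in $\supp(r)$, whose $e$-component is exactly $1$ — not merely "behaving like a unit". Once $r_e = 1$, the commutator argument applies against \emph{all} homogeneous elements, so $r$ is central in $A *_\alpha G$, not just an intertwiner for the degree-$e$ part. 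Outerness then enters only for homogeneous components of \emph{central} elements: for those, graded simplicity supplies a two-sided inverse (the paper's equations \eqref{abelian4} and \eqref{abelian5}), so the intertwining relation $b z_g = z_g \alpha_g(b)$ combined with that invertibility exhibits $\alpha_g$ as honestly inner, forcing $g = e$; the degree-$e$ central elements are then invertible, i.e. the relevant center is a field, and the minimal-support element is a unit. In short, the paper trades your unavailable implication "outer $\Rightarrow$ no nonzero intertwiner" for the tautological "outer $\Rightarrow$ no \emph{invertible} intertwiner", with invertibility extracted from $G$-simplicity via the $r_e = 1$ normalization; that normalization is the concrete idea your proposal lacks.
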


To describe our generalization of Theorem \ref{crow}
and its applications, 
we first need to answer the following question: 
\begin{center}
{\it What should it mean for a partial action of a group on a ring to be outer?}
\end{center}
As far as we know, this question has not
previously been analysed in the literature,
neither in the C*-algebra context, nor in the 
purely algebraical setting.
The starting point for our investigations
is the observation that many of the concepts 
concerning partial actions on rings 
are formulated by using only the operation of multiplication, 
and thus forgetting the additive structure.
In other words, we are working in the
multiplicative semigroup of a ring.

In Section \ref{sectionsemigroups}, we therefore
begin our explorations in a general semigroup $S$.
In addition, since we want to establish a 
non-unital version of Theorem \ref{crow},
we also have to decide on what it should mean for isomorphisms
$\alpha : I \rightarrow J$
of ideals $I$ and $J$ in $S$ to be outer, {\it locally at idempotents} $u \in I \cap J$. 
To motivate the approach taken later,
let us briefly describe the train of reasoning
that lead us to the formal definition. 
The restricted map 
$\alpha|_{uSu} : uSu \rightarrow \alpha(u) S \alpha(u)$
is also an isomorphism of semigroups.
So by mimicking the global case, the map
$\alpha|_{uSu}$ should be called ''inner'' if there are $a,b \in S$
such that $\alpha|_{uSu}(x) = bxa$ holds for all $x \in uSu$.
However, for such a definition to make sense, 
we need to assume that $a \in u S \alpha(u)$
and $b \in \alpha(u) S u$.
From the fact that $\alpha(u) = \alpha|_{uSu}(u) = bua = ba$
we get that $ba = \alpha(u)$.
Also, the inverse of $\alpha|_{uSu}$ should be defined
by the ''reversed'' map $a(\cdot)b$ from which we get that $ab = u$. 
Therefore, if such $a$ and $b$ exist, we say that $\alpha$
is inner at $u$; otherwise $\alpha$ is called outer at $u$
(see Definition \ref{cornerinner} for more details).

In Section \ref{sectiongradedrings},
we recall a result (see Theorem \ref{gather}) from \cite{GroupoidGradedRings}
by the authors of the present article concerning simplicity of
group graded rings which we will need in the subsequent section
for application to partial skew group rings,
which, in a natural way, are group graded rings.

In Section \ref{sectionpartialactions},
we use the definition of outer actions
in semigroups from Section \ref{sectionsemigroups} to define
outer partial actions $\alpha_g : D_{g^{-1}} \rightarrow D_g$ of a group $G$
on a ring $A$ in the following way (see Definition \ref{defnouter} for more details).
Consider $A$ as a semigroup with respect to multiplication.
If $g \in G$, then we say that $\alpha_g$
is inner (outer) at an idempotent $u \in A$
if it is inner (outer) at $u$ in the sense defined above.
Furthermore, we say that $\alpha$ is {\it outer} (or \emph{outer at $u$})
if there is a non-zero idempotent $u \in A$ such that
for each non-identity $g \in G$,
the map $\alpha_g$ is outer at $u$.
In the classical setting, i.e. when 
$A$ is unital and $\alpha$ is a global
action of $G$ on $A$, our definition
of outerness coincides with the classical
definition of outerness described above
(see Remark \ref{coincides}).
At the end of Section \ref{sectionpartialactions},
we show, with the aid of the result in Section \ref{sectiongradedrings}, 
the following generalization of Theorem \ref{crow}.

\begin{thm}\label{generalizationcrow}
If $\alpha_g : D_{g^{-1}} \rightarrow D_g$, for $g \in G$,
is an outer partial action of an abelian group $G$ on 
a ring $A$ such that
$D_g$, for each $g \in G$,
has local units, then the associated partial 
skew group ring $\SkewRing$ is simple if and only if $A$ is $G$-simple.
\end{thm}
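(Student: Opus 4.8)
The plan is to realize $R := \SkewRing$ as a $G$-graded ring, with homogeneous components $R_g = D_g \delta_g$ and identity component $R_e = D_e = A$, and then to feed this grading into the simplicity criterion of Theorem \ref{gather}. I will use that criterion in the form: $R$ is simple if and only if $R$ is graded simple and its center $Z(R)$ is a field. My task therefore splits into matching graded simplicity of $R$ with $G$-simplicity of $A$, and showing that outerness forces $Z(R)$ to be a field once $A$ is $G$-simple.

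First I would set up the order-preserving correspondence between graded ideals of $R$ and $G$-invariant ideals of $A$. Given a graded ideal $I = \bigoplus_g I_g$ with $I_g \subseteq D_g\delta_g$, the component $I_e$ is an ideal of $A$, and the relation $I_g = \alpha_g(I_e \cap D_{g^{-1}})\delta_g$ holds; conversely a $G$-invariant ideal $J$ of $A$ produces the graded ideal $\bigoplus_g \alpha_g(J \cap D_{g^{-1}})\delta_g$. The presence of local units in each $D_g$ is what makes this assignment a bijection, since it guarantees that each homogeneous component is recovered from $I_e$ and that the constructed object is genuinely an ideal. Consequently $R$ is graded simple if and only if $A$ is $G$-simple, and in particular the implication ``$R$ simple $\Rightarrow$ $A$ $G$-simple'' is immediate, requiring no outerness hypothesis.

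The heart of the argument is the reverse implication, where I would assume $A$ is $G$-simple (so $R$ is graded simple) and show $Z(R)$ is a field. Let $z = \sum_g a_g \delta_g \in Z(R)$. Writing out that $z$ commutes with every homogeneous element $c\delta_h$ and comparing components of equal degree, where the hypothesis that $G$ is abelian is essential since it keeps the two sides supported on the same degrees, yields for each $g$ a family of relations of the shape $\alpha_g(\alpha_{g^{-1}}(a_g) c) = \alpha_h(\alpha_{h^{-1}}(c) a_g)$. Localizing these relations at the idempotent $u$ witnessing outerness (Definition \ref{defnouter}) and using the local units of the $D_g$, I would extract, for any $g \neq e$ with $a_g \neq 0$, elements realizing $\alpha_g$ as inner at $u$ in the precise sense of Definition \ref{cornerinner}. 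Since $\alpha$ is outer, this is impossible, forcing $a_g = 0$ for all $g \neq e$; hence $Z(R) \subseteq R_e = A$. The surviving central degree-$e$ elements are exactly the $G$-invariant elements of $Z(A)$, and for any nonzero such $c$ the ideal $Ac$ is $G$-invariant, hence all of $A$ by $G$-simplicity, which (again via local units) makes $c$ invertible inside $Z(R)$; thus $Z(R)$ is a field.

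Finally I would assemble the pieces: combining the graded-ideal correspondence with the field property of $Z(R)$, Theorem \ref{gather} gives that $R$ is simple precisely when $A$ is $G$-simple. I expect the main obstacle to be the degree-$e$ concentration step, namely converting the centrality relations into the existence of the specific pair $a,b$ (with $ab = u$, $ba = \alpha_g(u)$, and $\alpha_g|_{uAu}(x) = bxa$) demanded by the local notion of innerness. Carrying this out without a global identity, using only the idempotent $u$ and the local units of the ideals $D_g$, while keeping careful track of which ideal each product lands in, is the technical crux; the abelianness of $G$ enters precisely to ensure that a nonzero degree-$g$ component of a central element reproduces the automorphism $\alpha_g$ itself rather than a conjugate of it.
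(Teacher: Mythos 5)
Your reduction to Theorem \ref{gather} misquotes that theorem in a way that is fatal in the local-unit setting. Theorem \ref{gather} does \emph{not} say that $R$ is simple iff it is graded simple and $Z(R)$ is a field; it says that if $R_e$ contains a nonzero idempotent $u$, then $R$ is simple iff it is graded simple and $Z(uRu)$ is a field. When $R$ is unital (take $u=1_R$) the two formulations agree, but for rings with local units they differ, and your version is false --- moreover the intermediate claim you set out to prove (outerness plus $G$-simplicity forces $Z(R)$ to be a field) is also false. A counterexample lies inside the scope of this very paper: let $K$ be a field, let $G=\Z$ act globally on $X=\Z$ by translation, and let $A=F_0(\Z,K)$, $R=\SkewRing$. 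This action is free and minimal, so $A$ is $G$-simple and $\alpha$ is strongly outer, and $R$ is simple (indeed $R\cong M_\infty(K)$, the finitary infinite matrices). Yet $Z(R)=\{0\}$: centrality against the functions $\chi_{\{x\}}\delta_e$ kills every component of degree $g\neq e$ (freeness), and the surviving degree-$e$ part must be a translation-invariant finitely supported function, hence zero. So a simple $R$ can have center $\{0\}$, which is not a field, and your criterion would wrongly declare it non-simple. The same defect shows up internally in your sketch at the last step: ``invertible inside $Z(R)$'' is meaningless, since $Z(R)$ need not contain any identity element; knowing $Ac=A$ for a $G$-invariant central $c$ does not produce a central inverse, and in the example above there are no nonzero $G$-invariant central elements at all.

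The repair is exactly the paper's device: fix the idempotent $u$ witnessing outerness and pass to the corner $S=(u\delta_e)(\SkewRing)(u\delta_e)$, which is unital with identity $u\delta_e$, inherits the grading and graded simplicity from $\SkewRing$, and to which Theorem \ref{gather} (correctly quoted) applies, reducing everything to showing that $Z(S)$ is a field. Your centrality computations should then be run inside $S$: graded simplicity of $S$ supplies $k\in D_{g^{-1}}$ with $(u\delta_e)(a_g\delta_g)(u\delta_e)(k\delta_{g^{-1}})(u\delta_e)=u\delta_e$, and commuting the central element past $(u\delta_e)(b\delta_e)(u\delta_e)$ for arbitrary $b\in A$ produces precisely the pair $a'=ua_g\alpha_g(u)$, $b'=\alpha_g(u)\alpha_g(k)u$ with $a'b'=u$, $b'a'=\alpha_g(u)$ and $\alpha_g(ubu)=b'(ubu)a'$, i.e.\ innerness at $u$ in the sense of Definition \ref{cornerinner}, contradicting outerness unless $g=e$; the same equation then makes every nonzero element of $Z(S)$ invertible with respect to the genuine identity $u\delta_e$. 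Your first part (graded simplicity of $\SkewRing$ is equivalent to $G$-simplicity of $A$ under local units, and the ``only if'' direction needs no outerness) is sound and is Proposition \ref{SimpleImpliesGSimple}; it is the location of the center --- corner versus global --- that your argument gets wrong.
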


In Section \ref{Sec:SetDynamics} and Section \ref{Sec:TopDynamics},
we show that Theorem \ref{generalizationcrow} effectively can be
applied to set dynamics respectively topological dynamics.
To be more precise, let us recall the following notions
for a partial action $\alpha$ of a group $G$ on a set
(topological space) $X$.
If for each non-identity $g \in G$, there is some $x \in X_{g^{-1}}$
such that $\alpha_g(x) \neq x$, then $\alpha$
is said to be \emph{faithful}.
If for each non-identity $g \in G$, the set of $x \in X_{g^{-1}}$ 
which satisfy $\alpha_g(x)=x$, is the empty set
(has empty interior), 
then $\alpha$ is called (topologically) \emph{free}.
Clearly, freeness implies topological freeness.
If $X$ and $\emptyset$ are the only $G$-invariant (closed) subsets of $X$,
then $\alpha$ is said to be (topologically) \emph{minimal}.

In the set dynamical case, we are given a partial action $\alpha$ of a group $G$
on a (non-empty) set $X$ and consider the partial skew group ring $F_0(X,B) \star_\alpha G$.
Here $F_0(X,B)$ denotes the algebra of finitely supported functions $X \to B$,
where $B$ is a simple associative
ring which has local units.

\begin{thm}\label{generalizationgoncalvesfinite}
If $G$ is abelian,
then the following three assertions are equivalent:
\begin{enumerate}[{\rm (i)}]
	\item $F_0(X,B) \star_\alpha G$ is simple;
	\item $\theta$ is minimal and free;
	\item $\theta$ is minimal and faithful.
\end{enumerate}
\end{thm}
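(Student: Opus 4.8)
The plan is to deduce the theorem from Theorem~\ref{generalizationcrow}. Writing $A := F_0(X,B)$, the set partial action $\theta = \alpha$ on $X$ induces a partial action of $G$ on the ring $A$, with domains $D_g = F_0(X_g,B)$ and $(\alpha_g f)(x) = f(\theta_{g^{-1}}(x))$ for $x \in X_g$. Since $B$ has local units and elements of $A$ have finite support, each $D_g = \bigoplus_{x \in X_g} B$ has local units, and $G$ is abelian, so the hypotheses of Theorem~\ref{generalizationcrow} are met as soon as $\alpha$ is known to be outer. The proof then reduces to three dictionary facts relating the dynamics of $\theta$ on $X$ to the ring-theoretic data of $A$, after which I would close the loop $(ii) \Rightarrow (i) \Rightarrow (iii) \Rightarrow (ii)$.

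First I would identify the ideal structure of $A$. Because $B$ is simple and has local units, the two-sided ideals of $A = \bigoplus_{x \in X} B$ are precisely the subsums $F_0(Y,B)$ for subsets $Y \subseteq X$: simplicity of $B$ forbids breaking up a single fibre, and the local units confine any ideal to a union of coordinates. Under this bijection a $G$-invariant ideal corresponds exactly to a $G$-invariant subset, so $A$ is $G$-simple if and only if $\theta$ is minimal. Second, and this is the heart of the matter, I would compute when $\alpha_g$ is inner at an idempotent. Taking a non-zero idempotent $u$ with $\supp(u) = \{x_0\}$ and $u(x_0)$ a non-zero idempotent of $B$, the point is that conjugation $h \mapsto bha$ by elements of $A$ acts pointwise and hence cannot move the point $x_0$, whereas $\alpha_g$ shifts $x_0$ to $\theta_g(x_0)$. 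Matching supports forces $x_0 \in X_{g^{-1}}$ and $\theta_g(x_0) = x_0$; conversely, if $\theta_g$ fixes $x_0$ then $\alpha_g$ restricts to the identity on $uAu$ and is trivially inner (see Definition~\ref{cornerinner}). Thus $\alpha_g$ is inner at $u$ iff $\theta_g$ fixes $x_0$, so if $\theta$ is free then every single-point idempotent witnesses that $\alpha$ is outer.

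With these in hand, $(ii) \Rightarrow (i)$ is immediate: minimality gives $G$-simplicity and freeness gives outerness, so Theorem~\ref{generalizationcrow} yields that $\SkewRing$ is simple. For $(iii) \Rightarrow (ii)$ the decisive input is the abelian hypothesis. For each $g$ the fixed set $\mathrm{Fix}(g) = \{x \in X_{g^{-1}} : \theta_g(x) = x\}$ is $G$-invariant: if $\theta_g(x) = x$ and $\theta_h(x)$ is defined, then $\theta_g(\theta_h(x)) = \theta_h(\theta_g(x)) = \theta_h(x)$ by commutativity of $\theta_g$ and $\theta_h$ (after the routine tracking of domains via the second partial-action axiom). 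Minimality then forces $\mathrm{Fix}(g) \in \{\emptyset, X\}$, and faithfulness rules out $\mathrm{Fix}(g) = X$, so $\theta$ is free. Finally, for $(i) \Rightarrow (iii)$ I would first note that a proper non-zero $G$-invariant ideal of $A$ generates a proper non-zero ideal of $\SkewRing$, so simplicity forces $G$-simplicity and hence minimality. If $\theta$ were not faithful, some $g \neq e$ would fix every point of a non-empty $X_{g^{-1}}$; by the invariance of $\mathrm{Fix}(g)$ and minimality this upgrades to $\theta_g = \id_X$, whence $D_g = A$ and $\alpha_g = \id_A$, and one then exhibits an explicit proper non-zero ideal (for instance the one generated by $u\delta_e - u\delta_g$ for a local unit $u$), contradicting simplicity.

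The main obstacle I anticipate is twofold. The substantive computation is the characterization of innerness at an idempotent in terms of fixed points of $\theta_g$: one must check both the support-matching argument (pointwise multiplication cannot permute points of $X$) and the precise form of the elements $a,b$ demanded by Definition~\ref{cornerinner}. The other delicate point is the final step of $(i) \Rightarrow (iii)$, namely producing a genuine proper ideal once $\alpha_g = \id_A$; here one uses that $\alpha_g = \id$ makes $\delta_g$ behave like a group-like element, and care is needed with local units in place of a global identity. Throughout, the abelian hypothesis is used essentially in exactly one place, the $G$-invariance of $\mathrm{Fix}(g)$, which is what allows minimality to upgrade faithfulness to freeness.
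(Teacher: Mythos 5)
Your proposal is correct and takes essentially the same route as the paper: the same cycle (ii)$\Rightarrow$(i)$\Rightarrow$(iii)$\Rightarrow$(ii), with minimality of $\theta$ if and only if $G$-simplicity of $F_0(X,B)$, freeness forcing outerness at point-supported idempotents, the abelian fixed-point-set argument for (iii)$\Rightarrow$(ii), and the proper ideal generated by $u\delta_e - u\delta_g$ when $\alpha_g = \id$ for (i)$\Rightarrow$(iii), all feeding into Theorem \ref{generalizationcrow}. The only cosmetic differences are that the paper establishes the minimality/$G$-simplicity dictionary via common vanishing sets rather than a full classification of ideals, reduces to point-supported idempotents via Proposition \ref{proppartialorder} (obtaining strong outerness), and obtains faithfulness from simplicity through the injectivity propositions without needing minimality.
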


In the topological dynamical case, we are given a partial action $\alpha$ of a group $G$
on a compact Hausdorff
space $X$ such that each $X_g$, for $g\in G$,
is clopen. 
Note that if $G$ is a countable discrete group, 
then these partial actions are exactly the ones for which
the enveloping space is Hausdorff (see \cite[Proposition 3.1]{ExelGiordanoGoncalves11}).
We then consider the partial skew group ring $C_E(X,B) \star_\alpha G$.
Here $B$ denotes a simple associative
topological real algebra
which has a set $E$ of local units. (Some additional assumptions are made 
on $B$, see Section \ref{Sec:TopDynamics}.)
The algebra $C_E(X,B)$ is the directed union of the ''local'' algebras
$C(X, \epsilon B \epsilon) = \{ \text{continuous } f : X \rightarrow \epsilon B \epsilon \}$
where $\epsilon$ runs over all elements in $E$.

\begin{thm}\label{generalizationgoncalves}
If $G$ is abelian, $X$ is compact Hausdorff and each 
$X_g$, for $g \in G$, is clopen, then the
following three assertions are equivalent:
\begin{enumerate}[{\rm (i)}]
	\item $C_E(X,B) \star_\alpha G$ is simple;
	\item $\theta$ is topologically minimal and topologically free;
	\item $\theta$ is topologically minimal and faithful.
\end{enumerate}
\end{thm}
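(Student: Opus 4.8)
The plan is to deduce all three equivalences from Theorem \ref{generalizationcrow}, following the template of the set-dynamical case (Theorem \ref{generalizationgoncalvesfinite}) but replacing each set-theoretic notion by its topological refinement. Write $A := C_E(X,B)$ and let $\alpha$ denote the partial action of $G$ on $A$ induced by $\theta$; concretely, $\alpha_g$ transports a function vanishing off $X_{g^{-1}}$ to its $\theta_g$-image, extended by zero off $X_g$. Since each $X_g$ is clopen, the ideal $D_g \subseteq A$ consists exactly of the functions in $A$ that vanish off $X_g$, and the functions taking the value $\epsilon$ on $X_g$ and $0$ elsewhere (for $\epsilon \in E$) are continuous and serve as local units for $D_g$; thus the hypotheses of Theorem \ref{generalizationcrow} on the coefficient ring are met. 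I would then prove the cycle (i) $\Rightarrow$ (iii) $\Rightarrow$ (ii) $\Rightarrow$ (i).

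Two translation lemmas form the backbone. First, \emph{$A$ is $G$-simple if and only if $\theta$ is topologically minimal}: because $B$ is simple, each ideal of $A$ consists precisely of the functions vanishing off a uniquely determined open set $U \subseteq X$, and such an ideal is $G$-invariant exactly when $U$ is a $G$-invariant open subset of $X$; passing to complements and using that $X$ is compact Hausdorff identifies the nontrivial $G$-invariant open sets with the nontrivial $G$-invariant closed sets, so the absence of the former is exactly topological minimality. Second, \emph{if $\theta$ is topologically free then $\alpha$ is outer}: fixing an idempotent $u \in A$ given by a constant local unit $x \mapsto \epsilon$, one checks that if some $\alpha_g$ with $g \neq e$ were inner at $u$ in the sense of Definition \ref{cornerinner}, then the implementing elements would, through the simplicity of $B$ together with continuity, force $\theta_g$ to fix every point of a nonempty open subset of $X_{g^{-1}}$, contradicting the empty-interior hypothesis on the fixed-point set. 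The subtle point here is that Definition \ref{defnouter} demands a \emph{single} idempotent $u$ witnessing outerness simultaneously for all non-identity $g$; the constant local unit $u$ discharges this uniformly.

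The remaining dynamical input is (iii) $\Rightarrow$ (ii): under topological minimality, faithfulness implies topological freeness. Here I would use that $G$ is abelian, so the fixed-point set $\{x \in X_{g^{-1}} : \theta_g(x) = x\}$ is invariant under every $\theta_h$ wherever defined, since $\theta_g \theta_h = \theta_{gh} = \theta_{hg} = \theta_h \theta_g$. Hence its interior is a $G$-invariant open set; were it nonempty, topological minimality would force its closure to be all of $X$, and as this fixed-point set is closed in the clopen domain $X_{g^{-1}}$ (by Hausdorffness) it would then equal $X_{g^{-1}}$, giving $\theta_g = \id$ on $X_{g^{-1}}$ and contradicting faithfulness. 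Thus minimal plus faithful yields topological freeness; the reverse implication (ii) $\Rightarrow$ (iii) is immediate, since an empty-interior fixed-point set cannot fill the nonempty open domain $X_{g^{-1}}$.

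Finally I would close the cycle. For (ii) $\Rightarrow$ (i): topological minimality gives $G$-simplicity of $A$ by the first lemma, topological freeness gives outerness of $\alpha$ by the second lemma, and Theorem \ref{generalizationcrow} then yields simplicity of $\SkewRing$. For (i) $\Rightarrow$ (iii): a proper nonzero $G$-invariant ideal of $A$ would generate a proper nonzero ideal of the partial skew group ring, so simplicity forces $G$-simplicity and hence, by the first lemma, topological minimality; moreover, if $\theta$ were not faithful, some $\theta_g$ with $g \neq e$ would be the identity on its clopen domain, which again produces a proper nonzero ideal of the skew group ring, so $\theta$ must be faithful. I expect the main obstacle to be the outerness translation, where the interplay between the non-unital, corner-wise inner condition of Definition \ref{cornerinner} and the topological freeness of $\theta$ must be controlled uniformly in $g$ and with a correctly chosen witnessing idempotent.
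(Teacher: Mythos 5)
Your overall architecture is the same as the paper's: the cycle (i)$\Rightarrow$(iii)$\Rightarrow$(ii)$\Rightarrow$(i), two translation lemmas ($G$-simplicity of the coefficient algebra versus topological minimality, and topological freeness implying outerness at the constant idempotent $\epsilon_X$), and commutativity of $G$ to pass from faithful plus minimal to topologically free. The genuine gap is in your proof of the first translation lemma. You assert that, since $B$ is simple, ``each ideal of $A$ consists precisely of the functions vanishing off a uniquely determined open set $U \subseteq X$.'' This is false already for $B=\R$ and $X=[0,1]$: the set of functions vanishing on \emph{some} neighborhood of $0$ (the neighborhood depending on the function) is a nonzero proper ideal of $C(X,\R)$, and it is not of the form $\{ f : f|_{X \setminus U} = 0 \}$ for any open $U$, since the only candidate $U = X \setminus \{0\}$ would give the strictly larger ideal of all functions vanishing at $0$. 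Such a description characterizes only the closed ideals, and nothing forces a $G$-invariant ring ideal to be closed. What the lemma actually requires is: if $I$ is a proper nonzero $G$-invariant ideal, then its common zero set $N_I = \bigcap_{f \in I} f^{-1}(\{0\})$ is a nonempty, proper, closed, $G$-invariant subset of $X$. Properness, closedness and invariance of $N_I$ are easy; \emph{nonemptiness} is the crux, and this is exactly where the paper (Proposition \ref{minimalequivalentGsimple}) uses compactness of $X$ together with the standing postulate (P) on $B$: if $N_I = \emptyset$, compactness produces finitely many $f \in I$ with no common zero, and then $\sum (q \circ f)\,\epsilon'_X$ lies in $I$ and is invertible in the corner $\epsilon'_X C(X,B) \epsilon'_X$, which forces all local units $\epsilon_X$ into $I$, i.e. $I = A$. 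Your proposal never invokes property (P) at all, which is a warning sign: for the class of coefficient algebras $B$ admitted in this section, simplicity of $B$ alone provides no positivity device for converting ``no common zero'' into ``contains a corner unit,'' and that device is an explicit hypothesis of the theorem's setting.

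Two smaller points. First, your outerness lemma fixes the right witnessing idempotent $u = \epsilon_X$ (and you correctly note that Definition \ref{defnouter} demands a single $u$ uniform in $g$), but the mechanism ``simplicity of $B$ together with continuity'' is not how the implication runs: in the paper's Proposition \ref{topologicallyfreeequivalentouter}, innerness of $\alpha_g$ at $\epsilon_X$ first forces $X_g = X_{g^{-1}} = X$ (because $\epsilon_X$ vanishes nowhere), and then substituting the central elements $r\epsilon_X$, $r \in C(X,\R)$, into the innerness identity yields $r \circ \theta_{g^{-1}} = r$ for all such $r$, whence $\theta_g = \id_X$ by Urysohn's lemma; this is stronger than your ``fixes a nonempty open set'' and needs no appeal to simplicity of $B$. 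Second, your aside that (ii)$\Rightarrow$(iii) is ``immediate'' breaks down when $X_{g^{-1}} = \emptyset$ for some non-identity $g$: there topological freeness holds vacuously while faithfulness fails. Your cycle does not use that implication, so this does not affect the proof, but the remark should be removed.
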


Note that Theorem \ref{generalizationgoncalvesfinite} and Theorem \ref{generalizationgoncalves}
generalize recent results by D. Gon\c{c}alves \cite{Goncalves13}
to also include cases when the coefficients are taken
from {\it non-commutative} rings which have local units.

\section{Outer Actions of Ideals in Semigroups}\label{sectionsemigroups}

In this section, we introduce the 
concepts of innerness and outerness of homomorphisms
of ideals in semigroups at idempotents (see Definition \ref{cornerinner}).
We also show that the innerness is preserved
by the classical partial order on
the idempotents in the semigroup
(see Proposition \ref{proppartialorder}).
We begin by fixing some notation.

Throughout this section, $S$ denotes a semigroup.
By this we mean that $S$ is a non-empty
set equipped with an associative
binary operation $S \times S \ni (x,y) \mapsto xy \in S$, which is referred to as the \emph{multiplication} of the semigroup.
For subsets $I$ and $J$ of $S$ we let 
$IJ$ denote the set of all products of the
form $xy$ for $x \in I$ and $y \in J$.
A non-empty subset $I$ of $S$ is called a subsemigroup
(left ideal, right ideal, ideal) of $S$
if $II \subseteq I$ ($SI \subseteq I$, $IS \subseteq I$,
$SI \cup IS \subseteq I$). 
If $T$ is another semigroup, then a 
map $\alpha : S \rightarrow T$ is a 
homomorphism of semigroups if it 
respects the multiplication in $S$ and $T$.
Suppose that $I$ and $J$ are right ideals of $S$. 
Then a map $\alpha : I \rightarrow J$ is called
a homomorphism of right ideals if $\alpha(xy) = \alpha(x)y$,
for $x \in I$ and $y \in S$.
We let $\Hom_S(I,J)$ denote the set of all
homomorphisms $I \rightarrow J$ of right ideals.
The concept of a homomorphism
of (left) ideals is defined analogously.

The first two propositions below have already appeared
in the context of ideals in rings
(see e.g. Proposition (21.6) and Proposition (21.20) in \cite{lam91}),
except for the last part of the first proposition.
However, we were not able to find
an appropriate reference for the case of semigroups.
The proofs are a close adaptation 
to semigroups of the proofs given in loc. cit.
and we include them for the convenience of the reader.

\begin{prop}\label{Hom}
Let $u$, $v$ and $w$ be idempotents in $S$
and suppose that $I$ is a right ideal of $S$.
Then the map of sets $\lambda : \Hom_S(uS,I) \rightarrow Iu$,
defined by $\lambda(\beta) = \beta(u)$,
for $\beta \in \Hom_S(uS,I)$, is a bijection.
In particular, if we put $I = vS$, 
then the corresponding map 
$\lambda_{v,u} : \Hom_S(uS,vS) \rightarrow vSu$
is a bijection. 
Moreover, if $\beta \in \Hom_S(uS,vS)$ and $\beta' \in \Hom_S(vS,wS)$, then 
$\lambda_{w,v}(\beta') \lambda_{v,u}(\beta) = (\lambda_{w,u})(\beta' \circ \beta)$.
\end{prop}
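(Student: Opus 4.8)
The plan is to verify each claim of Proposition \ref{Hom} directly, building on the elementary observation that a homomorphism of right ideals out of a principal right ideal $uS$ is completely determined by its value at the generator $u$. First I would check that $\lambda$ is well-defined, i.e. that $\beta(u) \in Iu$ for every $\beta \in \Hom_S(uS,I)$. Since $u$ is idempotent, $u = uu \in uS$, so $\beta(u)$ makes sense, and using that $\beta$ respects right multiplication we get $\beta(u) = \beta(uu) = \beta(u)u \in Iu$. This last identity is the crux: it shows $\beta(u)$ is automatically a right-$u$-fixed element of $I$.

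Next I would construct the inverse map to establish bijectivity. Given an element $a \in Iu$, define $\beta_a : uS \to I$ by $\beta_a(us) = as$ for $s \in S$. The main obstacle here — and really the only subtle point in the whole proposition — is \emph{well-definedness} of $\beta_a$: if $us_1 = us_2$ for $s_1, s_2 \in S$, I must check that $as_1 = as_2$. Because $a \in Iu$, I can write $a = a'u$ for some $a' \in I$ (using idempotency of $u$ to absorb, since $au = a'uu = a'u = a$), and then $as_i = a u s_i$, so $us_1 = us_2$ immediately forces $au s_1 = au s_2$, giving $as_1 = as_2$. Thus $\beta_a$ is well-defined; it is clearly a homomorphism of right ideals, lands in $I$ (as $as = a's$ with $a' \in I$ and $I$ a right ideal, so $as \in I$), and satisfies $\beta_a(u) = au = a$. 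Conversely, for any $\beta$ one checks $\beta_{\lambda(\beta)} = \beta$ since $\beta(us) = \beta(u)s$ for all $s$, so $\beta$ agrees with $\beta_{\beta(u)}$ on all of $uS$. This shows $\lambda$ is a bijection with inverse $a \mapsto \beta_a$.

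For the ``in particular'' clause I would simply specialize $I = vS$. The only thing to verify is that $\Hom_S(uS, vS)$ maps bijectively onto $vSu$ rather than merely $(vS)u$, but $Iu = (vS)u = vSu$ by definition of the notation, so $\lambda_{v,u}(\beta) = \beta(u) \in vSu$ and the restriction of the already-established bijection does the job.

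Finally, for the composition formula I would compute both sides directly using the characterization just obtained. Write $a = \lambda_{v,u}(\beta) = \beta(u) \in vSu$ and $b = \lambda_{w,v}(\beta') = \beta'(v) \in wSv$. Then $\lambda_{w,u}(\beta' \circ \beta) = (\beta' \circ \beta)(u) = \beta'(\beta(u)) = \beta'(a)$. Since $a \in vSu \subseteq vS$ and $a = va$ (as $a = \beta(u) = \beta(uu) = \beta(u)u$ lies in $vSu$, and writing $a = vs u$ gives $va = vvsu = vsu = a$), I can apply the right-ideal-homomorphism property of $\beta'$: writing $a = v a$ with $v$ the generator, $\beta'(a) = \beta'(va) = \beta'(v)a = b a$. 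This yields $\lambda_{w,u}(\beta' \circ \beta) = ba = \lambda_{w,v}(\beta')\,\lambda_{v,u}(\beta)$, as desired. The one bookkeeping point to keep straight throughout is the order reversal — the homomorphisms compose as $\beta' \circ \beta$ while their images multiply as $ba$ — but this falls out automatically from evaluating $\beta'$ on $a = va$.
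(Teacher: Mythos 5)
Your proof is correct and follows essentially the same route as the paper's: the key observation that $\beta$ is determined by $\beta(u)$ via $\beta(us)=\beta(u)s$, the same explicit inverse map $a \mapsto \beta_a$ (the paper writes it as $\beta_{iu}(us)=ius$, which coincides with your $\beta_a$ since $a=au$), and the identical computation $\beta'(v)\beta(u)=\beta'(v\beta(u))=\beta'(\beta(u))$ for the composition formula. The only cosmetic difference is that you verify a two-sided inverse where the paper checks injectivity and surjectivity separately.
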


\begin{proof}
First we show that $\lambda$ is well-defined.
Suppose that $\beta : uS \rightarrow I$
is a right ideal homomorphism.
Then $\lambda(\beta) = \beta(u) = \beta(u^2) = 
\beta(u)u \in Iu$.
Next, we show that $\lambda$ is injective.
Suppose that $\beta$ and $\beta'$ are 
right ideal homomorphisms $uS \rightarrow I$
such that $\lambda(\beta) = \lambda(\beta')$.
Take $s \in S$. Then
$\beta(us) = \beta(u)s = \lambda(\beta)s =
\lambda(\beta')s = \beta'(u)s = \beta'(us)$.
Therefore $\beta = \beta'$.
Finally, we show that $\lambda$ is surjective.
Take $iu \in Iu$, where $i \in I$.
Define $\beta_{iu} \in \Hom_S(uS,I)$ by
$\beta_{iu}(us) = ius$, for $s \in S$.
We claim that $\beta_{iu}$ is well-defined.
If we assume that the claim holds, then
$\lambda(\beta_{iu}) = \beta_{iu}(u) = \beta_{iu}(uu) = 
iuu = iu$ and thus $\lambda$ is surjective.
Now we show the claim.
Suppose that $us = us'$ for some $s,s' \in S$.
Then $\beta_{iu}(us) = ius = ius' = \beta_{iu}(us')$.
The second part follows immediately from the first part.
Now we show the last part of the proof.
Take $\beta \in \Hom_S(uS,vS)$ and $\beta' \in \Hom_S(vS,wS)$.
Then $\lambda_{w,v}(\beta') \lambda_{v,u}(\beta) =
\beta'(v) \beta(u) = \beta'(v \beta(u)) = \beta'(\beta(u)) =
\lambda_{w,u}(\beta' \circ \beta)$.
\end{proof}

\begin{prop}
If $u$ and $v$ are idempotents of $S$,
then the following four assertions are equivalent:
\begin{itemize}
\item[{\rm (a)}] $uS \cong vS$ as right $S$-ideals;

\item[{\rm (b)}] $Su \cong Sv$ as left $S$-ideals;

\item[{\rm (c)}] There exist $a \in uSv$ and $b \in vSu$
such that $ab = u$ and $ba = v$;

\item[{\rm (d)}] There exist $a,b \in S$
such that $ab = u$ and $ba = v$. 
\end{itemize}
\end{prop}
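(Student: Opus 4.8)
The plan is to prove the four equivalences by establishing a cycle of implications. Since conditions (a) and (b) are symmetric under swapping the roles of right and left ideals (and of $u$ and $v$), and conditions (c) and (d) are self-evidently symmetric in the same way, the natural strategy is to show (a)$\Rightarrow$(d)$\Rightarrow$(c)$\Rightarrow$(a), and then observe that (d) is phrased symmetrically enough to also yield (b) by the identical argument with $u,v$ interchanged. The implication (c)$\Rightarrow$(d) is trivial, so the real content lies in (a)$\Rightarrow$(d) and (d)$\Rightarrow$(c)/(a).

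First I would prove (a)$\Rightarrow$(d). Suppose $\beta : uS \to vS$ is an isomorphism of right $S$-ideals with inverse $\beta^{-1} : vS \to uS$. Using Proposition \ref{Hom}, set $a = \lambda_{v,u}(\beta) = \beta(u) \in vSu$ and $b = \lambda_{u,v}(\beta^{-1}) = \beta^{-1}(v) \in uSv$. The last (composition) part of Proposition \ref{Hom} then computes products of these elements directly from composition of the homomorphisms: $b a = \lambda_{u,v}(\beta^{-1}) \lambda_{v,u}(\beta) = \lambda_{u,u}(\beta^{-1} \circ \beta) = \lambda_{u,u}(\id_{uS}) = u$, since the identity map on $uS$ sends $u \mapsto u$. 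Symmetrically $a b = \lambda_{v,v}(\beta \circ \beta^{-1}) = \lambda_{v,v}(\id_{vS}) = v$. (I should take care with the ordering conventions in the composition formula, as $\lambda$ reverses the order of composition; this is where the main bookkeeping obstacle lies.) This gives elements $a \in vSu$, $b \in uSv$ with $ab = v$ and $ba = u$, which is exactly (d), and in fact already establishes the stronger (c) up to relabelling.

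For (d)$\Rightarrow$(a), suppose $a, b \in S$ satisfy $ab = u$ and $ba = v$. I would first upgrade these to lie in the correct corners: replacing $a$ by $uav$ and $b$ by $vbu$ leaves the relations $ab = u$, $ba = v$ intact (a short computation using idempotency and the two relations shows, for instance, $(uav)(vbu) = u(av)(bu) = u(ab)u = u^3 = u$), so without loss of generality $a \in uSv$ and $b \in vSu$; this simultaneously yields (c). Then I define $\beta : uS \to vS$ by $\beta(x) = bx$ for $x \in uS$, noting $b \cdot uS \subseteq vSu \cdot S \subseteq vS$, and define $\gamma : vS \to uS$ by $\gamma(y) = ay$. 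Both are right-ideal homomorphisms since left multiplication commutes with right $S$-action. The relations $ab = u$, $ba = v$ give $\gamma(\beta(x)) = abx = ux = x$ for $x \in uS$ and $\beta(\gamma(y)) = bay = vy = y$ for $y \in vS$, so $\beta$ is an isomorphism, proving (a).

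The main obstacle I anticipate is not any single deep step but rather keeping the left/right conventions and the order-reversal in the composition formula $\lambda_{w,v}(\beta')\lambda_{v,u}(\beta) = \lambda_{w,u}(\beta' \circ \beta)$ consistent throughout, together with the repeated verification that elements land in the intended corners $uSv$ and $vSu$. Once the convention-tracking is handled, the cyclic argument closes cleanly, and the left-ideal equivalence (b) follows either by running the entire (a)$\Leftrightarrow$(d) argument with left ideals $Su$, $Sv$ in place of right ideals, or by noting directly that the element-level condition (d) is symmetric under the involution exchanging left and right multiplication.
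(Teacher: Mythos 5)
Your proposal is correct and follows essentially the same route as the paper: it extracts the elements $a=\beta(u)$, $b=\beta^{-1}(v)$ via the bijection $\lambda$ and the composition formula of Proposition \ref{Hom} to get (a)$\Rightarrow$(c)/(d), proves (d)$\Rightarrow$(a) with the explicit left-multiplication maps $\beta(x)=bx$, $\gamma(y)=ay$, and invokes left--right symmetry for (b). The only cosmetic difference is your corner-upgrade step (replacing $a,b$ by $uav, vbu$) to obtain (d)$\Rightarrow$(c); the paper never needs this implication, since (c) comes straight out of the Hom argument, and it checks well-definedness of $\beta$ by the identity $bx = bux = babx = vbx$ instead of by first moving $a,b$ into the corners.
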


\begin{proof}
By left-right symmetry it is enough to show 
(a)$\Rightarrow$(c)$\Rightarrow$(d)$\Rightarrow$(a).

(a)$\Rightarrow$(c):
Let $\beta : uS \rightarrow vS$ be an isomorphism of right ideals.
Put $a = \lambda_{v,u}(\beta)$ and $b = \lambda_{u,v}(\beta^{-1})$.
Then, by the last part of Proposition \ref{Hom}, we get
$u = \lambda_{u,u}(\id_{uS}) = \lambda_{u,u}(\beta^{-1} \circ \beta) =
\lambda_{u,v}(\beta^{-1}) \lambda_{v,u}(\beta) = ba$ and
$v = \lambda_{v,v}(\id_{vS}) = \lambda_{v,v}(\beta \circ \beta^{-1}) =
\lambda_{v,u}(\beta) \lambda_{u,v}(\beta^{-1}) = ab$.

(c)$\Rightarrow$(d): Trivial.

(d)$\Rightarrow$(a): Suppose that there are
$a,b \in S$ such that $ab = u$ and $ba = v$.
Define $\beta : uS \rightarrow vS$
and $\gamma : vS \rightarrow uS$
by the relations $\beta(x) = bx$, for $x \in uS$,
and $\gamma(y) = ay$, for $y \in vS$, respectively.
Since $bx = b u x = b ab x = vbx$, for $x \in uS$,
and $ay = a vy = a ba y = u a y$, for $y \in vS$, 
it follows that $\beta$ and $\gamma$ are well-defined homomorphisms of right ideals.
Now we show that $\gamma \circ \beta = \id_{uS}$
and
$\beta \circ \gamma = \id_{vS}$.
Take $x \in uS$ and $y \in vS$.
Then $(\gamma \circ \beta) (x) = \gamma(bx) = abx = ux = x$
and $(\beta \circ \gamma) (y) = \beta(ay) = bay = vy = y$. 
\end{proof}

\begin{defn}\label{equivalent}
Let $u$ and $v$ be idempotents of $S$.
We say that $u$ and $v$ are {\it equivalent},
and denote this by $u \sim v$,
if $u$ and $v$ satisfy any (and hence all) of the 
equivalent conditions (a)-(d) above.
\end{defn}

\begin{defn}\label{cornerinner} 
Suppose that $I$ and $J$ are ideals of $S$
and $\alpha : I \rightarrow J$
is a semigroup homomorphism.
Let $u$ be an idempotent of $S$.
We say that $\alpha$ is {\it inner at} $u$
if $u \in I$ and $u \sim \alpha(u)$ where this equivalence is defined by
an isomorphism $\beta : uS \rightarrow \alpha(u)S$
of right $S$-ideals such that 
$\alpha(x) = \beta(u) x \beta^{-1}(\alpha(u))$
for all $x \in uSu$.
We say that $\alpha$ is {\it outer at} $u$
if $\alpha$ is not inner at $u$.
We say that $\alpha$ is {\it strongly outer}
if it is outer at {\it all} non-zero idempotents of $S$.
\end{defn}

\begin{rem}
Suppose that $I$ and $J$ are ideals of $S$
and that $\alpha : I \rightarrow J$
is a semigroup homomorphism
which is inner at an idempotent $u$ of $I$.
\begin{itemize}
	\item[(a)] Although we in the above definition
only assume that $\alpha : I \rightarrow J$ 
is a semigroup homomorphism, the restricted
map $\alpha|_{uSu} : uSu \rightarrow \alpha(u) S \alpha(u)$
is always an {\it isomorphism} of semigroups. 
In fact, if we put $a = \beta^{-1}(\alpha(u))$ and $b = \beta(u)$,
then $ba = \alpha(u)$ and $ab = u$ and
$\alpha(x) = bxa$ for all $x \in uSu$.
It is now clear that 
$\alpha|_{uSu}^{-1} : \beta(u) S \beta(u) \rightarrow uSu$
is defined by $\alpha|_{uSu}^{-1}(x) = axb$
for all $x \in \beta(u) S \beta(u)$.

\item[(b)] It follows that $u \in I \cap J$, since
$u = ab = a \alpha(u) b \in a J b \subseteq J$.

\item[(c)] If $S$ is a monoid 
and we let $u$ be the identity element of $S$, 
then $\alpha : S \to S$ is inner at $u$ precisely
when it is inner in the classical case, i.e. if
there is an invertible $y \in S$ such that
$\alpha(x) = y x y^{-1}$ for all $x \in S$.
In particular, by (a), this forces $\alpha$
to be a semigroup automorphism of $S$.
\end{itemize}
\end{rem}

\begin{defn}\label{partialorder}
Recall that the idempotents of $S$ 
can be partially ordered by saying that
$v \leq u$ if $uv = vu = v$.
An idempotent is called {\it minimal}
if it is minimal with respect to $\leq$.
\end{defn}

\begin{prop}\label{proppartialorder}
Suppose that $I$ and $J$ are ideals of $S$
and that $\alpha : I \rightarrow J$
is a semigroup homomorphism
which is inner at an idempotent $u$ of $I$.
If $v$ is another idempotent of $I$
with $v \leq u$, 
then $\alpha$ is inner at $v$.
\end{prop}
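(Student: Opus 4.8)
The strategy is to \emph{compress} the elements implementing innerness at $u$ into the corner determined by $v$. By part~(a) of the remark following Definition~\ref{cornerinner}, the hypothesis that $\alpha$ is inner at $u$ supplies elements $a = \beta^{-1}(\alpha(u))$ and $b = \beta(u)$ of $S$ satisfying $ab = u$, $ba = \alpha(u)$ and $\alpha(x) = bxa$ for all $x \in uSu$. Since $v \leq u$ means $uv = vu = v$, I would first record that $v = uvu \in uSu$ and, more generally, $vSv \subseteq uSu$; consequently $\alpha(v) = bva$, and $\alpha(v)$ is an idempotent because $\alpha(v)^2 = \alpha(v^2) = \alpha(v)$.

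Next I would propose the compressed pair $a' = va$ and $b' = bv$, and verify the three relations which, by that same remark, characterize innerness at $v$. Using $ab = u$ together with $uv = vu = v$ one computes $a'b' = v(ab)v = vuv = v$ and $b'a' = bv^2a = bva = \alpha(v)$. For the conjugation identity, every $x \in vSv$ satisfies $x = vxv$, so $b'xa' = b(vxv)a = bxa = \alpha(x)$, the last equality being the inner formula at $u$ applied to $x \in vSv \subseteq uSu$.

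With these relations established, I would invoke the proposition preceding Definition~\ref{equivalent} — specifically the implication (d)$\Rightarrow$(a) — to conclude that $v \sim \alpha(v)$ via the right-ideal isomorphism $\beta' : vS \to \alpha(v)S$ given by $\beta'(x) = b'x$, whose inverse is $y \mapsto a'y$. It then remains to match this data to Definition~\ref{cornerinner}. One has $\beta'(v) = b'v = bv^2 = b'$, and for $x \in vSv$ the identity $xv = x$ yields $\beta'(v)\,x\,\beta'^{-1}(\alpha(v)) = b'x(a'\alpha(v)) = (b'xa')\alpha(v) = \alpha(x)\alpha(v) = \alpha(xv) = \alpha(x)$, which is precisely the defining relation for $\alpha$ to be inner at $v$. (By Proposition~\ref{Hom} the elements $\beta'(v)$ and $\beta'^{-1}(\alpha(v))$ automatically lie in the corners $\alpha(v)Sv$ and $vS\alpha(v)$, so the formula is of the required shape.)

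The bulk of the argument is routine semigroup bookkeeping, all of it reducing to repeated use of $ab = u$ and $uv = vu = v$, so no extra hypotheses on $S$ are needed. The one point demanding genuine care is the choice of the compressed pair $a' = va$, $b' = bv$: it must be made so that $a'b' = v$, $b'a' = \alpha(v)$ and the conjugation identity on $vSv$ all hold \emph{simultaneously}, and one must then check that $\beta'(v)$ and $\beta'^{-1}(\alpha(v))$ genuinely reproduce $b'$ and $a'$ (the harmless trailing factor $\alpha(v)$ being absorbed via $xv = x$), so that the resulting expression lands exactly in the form demanded by Definition~\ref{cornerinner}.
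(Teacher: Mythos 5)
Your proof is correct and takes essentially the same route as the paper's own argument: both compress the pair implementing innerness at $u$ into the corner determined by $v$ and then verify the identities $a'b'=v$, $b'a'=\alpha(v)$ and $\alpha(x)=b'xa'$ on $vSv$. Your compressed pair $a'=va$, $b'=bv$ is marginally simpler than the paper's $a'=vd\alpha(v)$, $b'=\alpha(v)d'v$ (the paper first writes $a=ud\alpha(u)$, $b=\alpha(u)d'u$), and your explicit final step matching the data to Definition~\ref{cornerinner} via (d)$\Rightarrow$(a) merely spells out what the paper leaves implicit.
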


\begin{proof}
Suppose that there is an isomorphism $\beta : uS \rightarrow \alpha(u)S$
of right ideals such that 
$\alpha(x) = \beta(u) x \beta^{-1}(\alpha(u))$
for all $x \in uSu$.
Put $b = \beta(u)$ and $a = \beta^{-1}(\alpha(u))$.
Then $ab = u$ and $ba = \alpha(u)$
and there are some $d,d' \in S$ such that 
$a = u d \alpha(u)$ and $b = \alpha(u) d' u$.

Consider the elements $a' = v d \alpha(v)$ and $b' = \alpha(v) d' v$.
Then
$a \alpha(x) b = a (bxa)b = u x u = x$ holds for any $x \in uSu$.
In particular, for $x=v$ this yields $a \alpha(v) b = v$
and hence
\begin{align*}
a'b' &= (v d \alpha(v)) (\alpha(v) d' v) = 
v d \alpha(v) d' v = 
vu d \alpha(u) \alpha(v) \alpha(u) d' uv \\
&= v(u d \alpha(u)) \alpha(v) (\alpha(u) d' u)v = vvv = v.
\end{align*}
Moreover,  
$bva = \alpha(v)$ and hence 
\begin{align*}
	b'a' &= (\alpha(v) d' v) (v d \alpha(v)) = 
\alpha(v) d' v d \alpha(v) 
= \alpha(v) \alpha(u) d' u v u d \alpha(u) \alpha(v) \\
&=\alpha(v) (\alpha(u) d' u ) v (u d \alpha(u)) \alpha(v) =
\alpha(v) \alpha(v) \alpha(v) = \alpha(v).
\end{align*}
Take $x \in vSv \subseteq uSu$.
There is some $z\in S$ such that $x=vzv$. Hence, $\alpha(x)=\alpha(vzv)=
\alpha(v)\alpha(zv)=\alpha(vz)\alpha(v)$.
This shows that $\alpha(x) = \alpha(v) \alpha(x) \alpha(v)$.
Then
\begin{align*}
	\alpha(x) &= (\alpha(u) d' u) x (u d \alpha(u))
= (\alpha(u) d' u) v x v (u d \alpha(u))
= \alpha(u) d' v x v d \alpha(u)\\
&= \alpha(v) ( \alpha(u) d' v x v d \alpha(u) ) \alpha(v)
= (\alpha(v) d' v) x (v d \alpha(v) ) = b' x a'.
\end{align*}
This shows that $\alpha$ is inner at $v$.
\end{proof}

\begin{rem}\label{rem:innerlocalglobal}
The conclusion of Proposition \ref{proppartialorder}
does not hold, in general, if $v \leq u$ is replaced by $u \leq v$.
In particular, local innerness can not always be lifted to global innerness.
To be more precise, suppose that $I$ and $J$ are ideals of $S$
and that $\alpha : I \rightarrow J$
is a semigroup homomorphism.
If $u,v \in S$ are idempotents such 
that $v \leq u$ and $\alpha$ is inner at $v$,
then this does not in general imply that 
$\alpha$ is inner at $u$.
In fact, let $S = I = J$ denote the multiplicative semigroup
of functions from $\{ 1,2,3 \}$ to a field $K$.
Let $u,v \in S$ be defined by  
$u(1)=u(2)=u(3)=1_K$ respectively $v(1)=1_K$ and $v(2)=v(3)=0$.
Then $v \leq u$.
If we define $\alpha : S \rightarrow S$ by
$\alpha(f)(1)=f(1)$, $\alpha(f)(2)=f(3)$ and $\alpha(f)(3)=f(2)$,
for all $f \in S$,
then it is easy to see that 
$\alpha|_{vSv} = \id_{vSv}$.
Clearly, $\alpha$ is inner at $v$,
but
outer at $u$.
\end{rem}

\begin{defn}\label{completeminimal}
We say that a set $E$ of minimal non-zero idempotents of $S$
is a {\it complete set of minimal idempotents} if
for each non-zero idempotent $u \in S$, there
is $v \in E$ such that $v \leq u$. 
\end{defn}

\begin{cor}
Suppose that there is a complete set $E$ of minimal 
idempotents of $S$.
Let $I$ and $J$ be ideals of $S$
and suppose that $\alpha : I \rightarrow J$
is a semigroup homomorphism.
Then $\alpha$ is strongly outer if and only if 
it is outer at each $u \in E$.
\end{cor}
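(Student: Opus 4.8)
The plan is to prove the two implications separately, the forward one being immediate and the reverse one being the substantive direction that rests directly on Proposition \ref{proppartialorder}.

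For the forward implication, suppose $\alpha$ is strongly outer. By definition this means $\alpha$ is outer at \emph{every} non-zero idempotent of $S$. Since every element of $E$ is, by hypothesis, a non-zero (indeed minimal) idempotent of $S$, it follows at once that $\alpha$ is outer at each $u \in E$, and nothing further is needed.

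For the reverse implication I would argue by contraposition. Suppose $\alpha$ is \emph{not} strongly outer, so that there is some non-zero idempotent $w \in S$ at which $\alpha$ is inner. The definition of innerness at $w$ already forces $w \in I$. Because $E$ is a complete set of minimal idempotents, there exists $v \in E$ with $v \leq w$. Before invoking Proposition \ref{proppartialorder} I would verify that $v$ is an idempotent \emph{of} $I$: the relation $v \leq w$ gives $v = wv$, and since $w \in I$ and $I$ is an ideal we have $wv \in IS \subseteq I$, whence $v = wv \in I$.

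With this membership established, Proposition \ref{proppartialorder} applies with $w$ in the role of $u$ and the idempotent $v \in I$ satisfying $v \leq w$, and it yields that $\alpha$ is inner at $v$. As $v \in E$, this exhibits an element of $E$ at which $\alpha$ fails to be outer, which is precisely the negation of the right-hand condition; taking the contrapositive completes the argument. The only step demanding any care is the verification that $v \in I$, so that the hypotheses of Proposition \ref{proppartialorder} are genuinely satisfied; once that is in hand the corollary follows with no additional computation, since the real content—that innerness descends along the partial order $\leq$—has already been supplied by Proposition \ref{proppartialorder}.
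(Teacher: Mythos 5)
Your proof is correct and follows exactly the route the paper intends: the paper's own proof is the one-line remark that the corollary ``follows immediately from Proposition \ref{proppartialorder} and Definition \ref{completeminimal}'', which is precisely your contrapositive argument spelled out. Your extra verification that $v = wv \in I$ (so that Proposition \ref{proppartialorder} genuinely applies) is a detail the paper leaves implicit, and it is a worthwhile one to record.
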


\begin{proof}
This follows immediately from Proposition \ref{proppartialorder}
and Definition \ref{completeminimal}. 
\end{proof}

\begin{rem}
Innerness of ring automorphisms at idempotents
(however not in the generality of semigroup homomorphisms of ideals) 
have been considered by J. Haefner and A. del Rio 
in \cite[Definition 1.2 on p. 38]{haefnerdelrio}. 
\end{rem}

\section{Simple Group Graded Rings}\label{sectiongradedrings}

In this section, we recall a result 
(see Theorem \ref{gather}) from \cite{GroupoidGradedRings}
by the authors of the present article concerning simple
group graded rings which we will need in the sequel.
We begin by fixing some notation.

Let $R$ denote a ring which is associative but not necessarily unital.
If $R$ is unital, then we let $1_R$
denote its multiplicative identity element.
By an \emph{ideal} of $R$ we always mean a two-sided 
ideal of $R$.
The \emph{center} of $R$, denoted by $Z(R)$,
is the set of elements $x \in R$ with the property
that $xy = yx$ holds for each $y \in R$.
Recall from \cite{anh87} that $R$ is said to have \emph{local units} if
there exists a set $E$ of idempotents of $R$ such that,
for every finite subset $X$ of $R$,
there exists an $f\in E$ such that $X \subseteq fRf$. 
From this it follows that
$x = f x = x f$ holds for each $x\in X$.

Let $G$ denote a group with identity element $e$.
Recall that $R$ is said to be 
\emph{graded} (by $G$), if there for each $g \in G$
is an additive subgroup $R_g$ of $R$
such that $R = \oplus_{g \in G} R_g$
and the inclusion $R_g R_h \subseteq R_{gh}$ holds
for all $g,h \in G$.
Take $r \in R$.
There are unique
$r_g \in R_g$, for $g \in G$,
such that all but finitely many of them
are zero and $r = \sum_{g \in G} r_g$.
We let the \emph{support} of $r$,
denoted by $\supp(r)$, be the set of 
$g \in G$ such that $r_g \neq 0$.
The element $r$ is called
\emph{homogeneous} if $|\supp(r)| \leq 1$.
If $r \in R_g \setminus \{ 0 \}$, for some $g \in G$,
then we write $\deg(r)=g$.
An additive subgroup $A$ of $R$,
is called \emph{graded}
if $A = \oplus_{g \in G} (A \cap R_g)$ holds.
The ring $R$ is said to be \emph{graded simple} 
if $R$ and $\{0\}$ are its only graded ideals.
Clearly, graded simplicity is
a necessary condition for simplicity.

\begin{thm}\label{gather}
If $R$ is a ring graded by an abelian group $G$
and $R_e$ contains a non-zero idempotent $u$,
then $R$ is simple if and only if 
it is graded simple and $Z(uRu)$ is a field.
\end{thm}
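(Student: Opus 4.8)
The plan is to prove Theorem~\ref{gather} by establishing both implications, with the forward direction being essentially immediate and the reverse direction carrying the real content. For the forward implication, suppose $R$ is simple. Then $R$ is automatically graded simple since every graded ideal is in particular an ideal, so the nontrivial graded ideals are ruled out by simplicity. It remains to show that $Z(uRu)$ is a field. The corner ring $uRu$ is a unital ring with identity $u$, and its center $Z(uRu)$ is a commutative unital ring; I would show that any nonzero element $z \in Z(uRu)$ is invertible by considering the ideal of $R$ that $z$ generates and using simplicity to force that ideal to be all of $R$, then tracing the resulting expression for $u$ back into the corner to produce an inverse for $z$. The abelian grading should let me conclude that the relevant elements stay homogeneous in degree $e$.

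For the reverse implication, I would assume $R$ is graded simple and $Z(uRu)$ is a field, and aim to show $R$ is simple. Let $J$ be a nonzero ideal of $R$; the goal is $J = R$. The key maneuver is to pass from $J$ to its homogeneous components: since $G$ is abelian, the conjugation-type action of homogeneous units and the grading interact well, and I expect to show that $J$ must meet $R_e$ nontrivially, reducing the problem to understanding $J \cap R_e$ and, more specifically, $uJu \subseteq uRu$. I would argue that $J \cap Z(uRu)$ is nonzero: one takes a nonzero element of $J$ of minimal support, uses the abelianness of $G$ together with the idempotent $u \in R_e$ to produce a nonzero element of $J$ lying in the corner $uRu$, and then averages or commutes it into the center $Z(uRu)$. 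Here the hypothesis that $Z(uRu)$ is a field delivers a unit of $R$ (namely $u$ itself, realized inside $J$), which combined with graded simplicity forces $J$ to contain the graded ideal generated by $u$, and hence $J = R$.

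The main obstacle, and the step I would spend the most care on, is producing a nonzero \emph{central} element of the corner inside the ideal $J$. This is where the abelian hypothesis on $G$ is indispensable: for a general group the homogeneous components do not commute past each other in the required way, and the ``minimal support'' argument can fail to terminate at a central element. Concretely, given $0 \neq x \in J$ with $\supp(x)$ of minimal cardinality, I would multiply by $u$ on both sides and by suitable homogeneous elements to kill all but the degree-$e$ part while staying inside $J$ and inside $uRu$; the commutativity of $G$ ensures that the degree bookkeeping $R_g R_h = R_h R_g$-compatible supports collapse correctly, so that a nonzero element of $uJu \cap R_e = uJu$ survives. Once such an element is obtained, promoting it to a central element of $uRu$ uses that $u R u$ is unital and that conjugation by homogeneous pieces of degree $g$ and $g^{-1}$ composes to the identity on degrees, again by abelianness.

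Finally, I would note the role of the idempotent $u \in R_e$ throughout: it provides the unital corner $uRu$ in which centrality and field-theoretic arguments make sense, and graded simplicity lets me propagate information from the corner back out to all of $R$. The two hypotheses thus play complementary roles, graded simplicity handling the ``spreading out'' and the field condition on $Z(uRu)$ handling the ``invertibility,'' and I expect the proof to hinge on cleanly separating these two tasks via the passage to $uJu$.
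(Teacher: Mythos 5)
Your overall skeleton---take a nonzero ideal $J$, pick an element of $J$ of minimal support, use graded simplicity to push information into degree $e$, produce a nonzero element of $Z(uRu) \cap J$, and then use the field hypothesis plus graded simplicity to conclude $J = R$---is exactly the paper's strategy, and your forward direction is fine. But the concrete plan you give for the crucial step is not workable, in two respects. First, you propose to ``multiply by $u$ on both sides and by suitable homogeneous elements to kill all but the degree-$e$ part'' of the minimal-support element $x$, so as to land in $uJu \cap R_e$ (note also that your identity $uJu \cap R_e = uJu$ is false, since $uJu$ is not contained in $R_e$). This is self-defeating: by the very minimality of $|\supp(x)|$, any multiplication that strictly shrinks the support must annihilate the element entirely, so you can never isolate a single homogeneous component of a non-homogeneous $x$. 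The correct use of minimality runs in the opposite direction: one shows that for every $t \in uR_hu$ the commutator $rt - tr \in J$ has strictly smaller support than $r$, hence is zero, so that the whole (non-homogeneous!) element $r$, or rather $uru$, lies in $Z(uRu) \cap J$. There is no ``averaging'' device available here to push a corner element into the center; centrality must come from this commutator-killing argument, and the field hypothesis is then applied to that central element to get $u \in J$.

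Second, for the commutator argument to reduce support you need more than $J \cap R_e \neq \{0\}$: you need a minimal-support $r \in J$ whose degree-$e$ component is \emph{exactly} $u$, since only then does the $h$-component of $rt - tr$, namely $r_e t - t r_e = ut - tu$, vanish for $t \in uR_hu$. Your sketch never produces this normalization, and it is the technical heart of the paper's proof: one forms the ideal $J' = \{ s_e \mid s \in RrR,\ \supp(s) \subseteq \supp(r) \}$ of $R_e$, applies graded simplicity to the graded ideal $RJ'R$ to write $u = \sum_i v_i s^{(i)}_e w_i$, and uses abelianness of $G$ (choosing $\deg(v_i)\deg(w_i)=e$) to check that $s = \sum_i v_i s^{(i)} w_i \in J$ still has support inside $\supp(r)$ and satisfies $s_e = u$. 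Without this step, or a substitute for it, the centrality claim---and with it your whole reverse implication---does not go through.
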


\begin{proof}
This follows from a more general result, by the authors
of the present article, concerning simplicity of 
semigroup graded rings (see \cite[Theorem 2]{GroupoidGradedRings}).
For the convenience of the reader, we now give a direct proof.
The ''only if' statement is straightforward.
Now we show the ''if'' statement.
Let $I$ be a non-zero ideal of $R$.
Take $r \in I \setminus \{0\}$ such that $|\supp(r)|$ is minimal.
Choose some $g\in G$ such that $r_g$ is non-zero.

Since $R$ is graded simple, there are
homogeneous $s_i,t_i \in R$, for $i = \{1,\ldots,n\}$,
such that $\sum_{i=1}^n s_i r_g t_i = u$.
In particular, there is $j \in \{ 1,\ldots,n \}$
such that $s_j r_g t_i \in R_e \setminus \{ 0 \}$.
By replacing $r$ with $s_j r t_j$, we can 
from now on assume that $r_e$ is non-zero.

Next we show that we may suppose that $r_e = u$.
Put
\begin{displaymath}
	J = \{ s_e \mid s \in R r R, \ \supp(s) \subseteq \supp(r) \}.
\end{displaymath}
Then $J$ is a non-zero ideal of $R_e$ and hence
$R J R$ is a non-zero graded ideal of $R$.
By graded simplicity of $R$ we get that 
there are $s^{(i)} \in R r R$
and $v_i,w_i \in R$, for $i \in \{1,\ldots,n\}$,
such that $\supp( s^{(i)} ) \subseteq \supp(r)$ and
$u = \sum_{i=1}^n v_i s^{(i)}_e w_i$.
From the last equality it follows that we may suppose that
$\deg(v_i) \deg(w_i) = e$
for all $i$ such that $v_i s^{(i)}_e w_i \neq 0$.
Put $s =  \sum_{i=1}^n v_i s^{(i)} w_i$.
Then $s \in I$ and since 
$\supp(s^{(i)}) \subseteq \supp(r)$ for all $i$
and $G$ is abelian,
we get that $\supp(s) \subseteq \supp(r)$. 
Therefore, $u = \sum_{i=1}^n v_i s^{(i)}_e w_i = s_e \in J$.

Finally we show that $I=R$.
Take $h \in G$ and $t \in u R_h u$.
Since $r_e=u$ and $G$ is abelian,
we get that $|\supp(rt - tr)| < |\supp(r)|$.
By the assumption that $|\supp(r)|$ is minimal, and the fact that $rt - tr \in I$,
we get that
$\supp(rt - tr) = \emptyset$ and hence
that $rt - tr = 0$.
Since $h \in G$ was arbitrarily chosen, we get that $r \in Z(uRu) \cap I$.
Using that $Z(uRu)$ is a field, we get that $u \in I$.
Therefore, since $R$ is graded simple, we get that
$R = RuR \subseteq I$.
\end{proof}

\section{Partial Actions and Partial Skew Group Rings}\label{sectionpartialactions}

In this section, we introduce
outer partial actions of groups 
on rings (see Definition \ref{defnouter})
and we prove the main result of this 
article concerning simplicity
of partial skew group rings (see Theorem \ref{generalizationcrow}).

\begin{assumption}
Throughout this section, 
$\alpha$ will denote a partial action of a group $G$
on a ring $A$, and the corresponding ideals of $A$ 
are denoted by $D_g$, for $g \in G$.
\end{assumption}

\begin{defn}
The \emph{partial skew group ring} 
$\SkewRing$ is defined as the set of all finite formal sums 
$\sum_{g\in G} a_g \delta_g$, where for each $g\in G$, 
$a_g \in D_g$ and $\delta_g$ is a symbol.
Addition is defined in the obvious way and multiplication 
is defined as the linear extension of the rule
$(a_g \delta_g)(b_h \delta_h)=\alpha_g(\alpha_{g^{-1}}(a_g)b_h) \delta_{gh}$
for $g,h\in G$, $a_g \in D_g$ and $b_h \in D_h$.
Clearly, each classical skew group ring
(see e.g. \cite{crow2005,FisherMontgomery78,oinertarxiv11}) is 
a partial skew group ring where $D_g = A$ for all $g \in G$.
\end{defn}

\begin{rem}
A partial skew group ring $A \star_\alpha G$ need not 
in general be associative (see \cite[Example 3.5]{DokuchaevExel05}).
However, if each $D_g$, for $g \in G$, has  
local units, then, in particular, each $D_g$, for $g \in G$,
is an idempotent ring, i.e. $D_g^2 = D_g$, which
by \cite[Corollary 3.2]{DokuchaevExel05},
ensures that $\SkewRing$ is associative.
In that case, the set $E \delta_e = \{f\delta_e \mid f\in E\}$ 
is a set of local units for $\SkewRing$,
if $E$ is a set of local units for $A$.
\end{rem}

\begin{defn}
If there does not exist
any non-identity $g \in G$ such that $D_g \cap D_{g^{-1}}$ is non-zero and
$\alpha_g|_{D_g \cap D_{g^{-1}}} = \id_{D_g \cap D_{g^{-1}}}$,
then $\alpha$ is said to be \emph{injective}.
\end{defn}

The next result extends a well-known result for group actions on rings
(see e.g. \cite{oinertarxiv11}),
to the case of partial actions.

\begin{prop}\label{simpleimpliesinjective}
If the partial
skew group ring $\SkewRing$ is simple,
then $\alpha$ is injective.
\end{prop}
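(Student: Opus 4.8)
The plan is to prove the contrapositive: assuming $\alpha$ is not injective, I will exhibit a proper non-zero ideal of $\SkewRing$, so that $\SkewRing$ is not simple. By hypothesis there is a non-identity $g \in G$ such that $N := D_g \cap D_{g^{-1}}$ is non-zero and $\alpha_g|_N = \id_N$. I would first record two easy facts: since $N$ is an intersection of two ideals, it is an ideal of $A$; and since $\alpha_g(N) = N$ and $\alpha_{g^{-1}} = \alpha_g^{-1}$ on $D_g$, we also have $\alpha_{g^{-1}}|_N = \id_N$. I would then consider the two-sided ideal $I$ of $\SkewRing$ generated by the set $\{ a\delta_g - a\delta_e \mid a \in N \}$. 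For any non-zero $a \in N$ the element $a\delta_g - a\delta_e$ has non-zero homogeneous components in the two distinct degrees $g \neq e$, and is therefore non-zero; hence $I \neq \{0\}$.

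The remaining, and main, task is to show that $I$ is proper. For this I would introduce the additive map $\epsilon : \SkewRing \to A$ sending $\sum_{s \in G} a_s \delta_s$ to $\sum_{s \in G} a_s$ (a finite sum of elements of $A$). This $\epsilon$ is not a ring homomorphism, but it need not be: it suffices to check that $I \subseteq \ker \epsilon$ while $\ker \epsilon \neq \SkewRing$. The latter is immediate, since $A \neq \{0\}$ (otherwise $\SkewRing$ would be zero, hence not simple) and $\epsilon(a_0 \delta_e) = a_0 \neq 0$ for any non-zero $a_0 \in A$. Granting $I \subseteq \ker\epsilon$, we obtain $I \subseteq \ker\epsilon \subsetneq \SkewRing$, so $I$ is a proper non-zero ideal and $\SkewRing$ is not simple, as desired.

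It therefore remains to verify that $\epsilon$ annihilates $I$. Since each $D_g$ has local units, $\SkewRing$ is associative and $I = \SkewRing \cdot \{ a\delta_g - a\delta_e \mid a \in N \} \cdot \SkewRing$, so every element of $I$ is a finite sum of products $(b\delta_p)(a\delta_g - a\delta_e)(c\delta_q)$ with $a \in N$ and $b\delta_p, c\delta_q$ homogeneous. By additivity it is enough to show that each such product lies in $\ker\epsilon$. Writing its two summands as $X\delta_{pgq}$ and $Y\delta_{pq}$, I would prove that $X = Y$, whence $\epsilon\bigl((b\delta_p)(a\delta_g - a\delta_e)(c\delta_q)\bigr) = X - Y = 0$. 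This equality is the crux of the argument and the step I expect to be the main obstacle. The key inputs are that $e_0 := \alpha_{p^{-1}}(b)\,a$ lies in $N$ (because $N$ is an ideal and $a \in N$) and that $\alpha_g, \alpha_{g^{-1}}$ act as the identity on $N$. Concretely, a short computation with the multiplication rule and the partial-action axioms (ii) and (iii) gives $X = \alpha_{pg}(e_0 c)$ and $Y = \alpha_p(e_0 c)$; since $e_0 c \in N$ one has $\alpha_g(e_0 c) = e_0 c$, so $\alpha_{pg}(e_0 c) = \alpha_p(\alpha_g(e_0 c)) = \alpha_p(e_0 c)$, i.e. $X = Y$. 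The delicate point, which is where I would spend the most care, is the bookkeeping of the domains $D_s$ when invoking the composition axiom $\alpha_{pg} = \alpha_p \circ \alpha_g$ and the collapse $\alpha_{g^{-1}p^{-1}} \circ \alpha_p = \alpha_{g^{-1}}$ on $e_0$; these are legitimate precisely because all the relevant elements have been arranged to lie in $N$ and in the appropriate ideals $D_s$. Notably, this argument uses neither that $G$ is abelian nor any assumption on the ideals beyond having local units.
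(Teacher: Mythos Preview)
Your proof is correct and follows the same approach as the paper: both construct the ideal generated by $a\delta_e - a\delta_g$ for a non-zero $a\in D_g\cap D_{g^{-1}}$ and argue it is proper and non-zero. The paper simply asserts that this ideal is ``clearly'' strictly contained in $\SkewRing$, whereas you carefully verify properness via the augmentation-type map $\epsilon$; your domain bookkeeping for the partial-action axioms is accurate.

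One minor point: you invoke local units to obtain associativity and to reduce $I$ to three-fold products $(b\delta_p)(a\delta_g - a\delta_e)(c\delta_q)$, but the paper explicitly remarks that the proposition holds without any such hypothesis (even when $\SkewRing$ is non-associative). Your $\epsilon$-argument already extends to the associative non-unital case with almost no extra work: for the terms $n(a\delta_g - a\delta_e)$, $(b\delta_p)(a\delta_g - a\delta_e)$, and $(a\delta_g - a\delta_e)(c\delta_q)$ the same computation (indeed a simpler one) gives equal coefficients in the two degrees, so $\epsilon$ vanishes on all of $\mathbb{Z}S + RS + SR + RSR$. Thus the local-units assumption is not actually needed for your method.
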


\begin{proof}
Suppose that $\alpha$ is not injective.
Then there is a non-identity $g \in G$
such that $D_g \cap D_{g^{-1}} \neq \{0\}$
and $\alpha_g|_{D_g \cap D_{g^{-1}}} = \id_{D_g \cap D_{g^{-1}}}$.
Take a non-zero element $i \in D_g \cap D_{g^{-1}}$.
Let $J$ be the ideal of $\SkewRing$
generated by the element $i \delta_e - i \delta_g$.
It is clear that $J$ is non-zero and strictly
contained in $\SkewRing$. 
Therefore, $\SkewRing$ is not simple.
\end{proof}

\begin{rem}
Note that $\SkewRing$ need not be associative
for Proposition \ref{simpleimpliesinjective} to hold.
\end{rem}

\begin{rem}
It is easy to check that if we put
$(\SkewRing)_g = D_g \delta_g$, for $g \in G$,
then this defines a gradation on the ring $\SkewRing$.
In the sequel, whenever we speak of \emph{graded} or \emph{graded simple} it will
be with respect to this gradation.
\end{rem}

\begin{prop}\label{SimpleImpliesGSimple}
If each $D_g$, for $g \in G$, has local units, 
then $\SkewRing$ is graded simple
if and only if $A$ is $G$-simple.
\end{prop}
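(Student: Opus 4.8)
The plan is to set up an inclusion- and multiplication-compatible correspondence between the graded ideals of $\SkewRing$ and the $G$-invariant ideals of $A = D_e$. To a graded ideal $\mathcal{I}$ of $\SkewRing$ I associate, for each $g \in G$, the additive subgroup $I_g = \{ a \in D_g : a\delta_g \in \mathcal{I} \}$ of $D_g$; since $\mathcal{I}$ is graded we have $\mathcal{I} = \bigoplus_{g \in G} I_g \delta_g$. Conversely, to a $G$-invariant ideal $I$ of $A$ I associate the graded subspace $\mathcal{I}(I) = \bigoplus_{g \in G} (I \cap D_g)\delta_g$. The proposition follows once I show that $\mathcal{I}(I)$ is always a graded ideal, that $I_e$ is always a $G$-invariant ideal of $A$, and that each construction detects non-triviality and properness.

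First I would treat the direction ``graded simple $\Rightarrow$ $G$-simple''. Let $I$ be a non-zero $G$-invariant ideal of $A$ and verify that $\mathcal{I}(I)$ is a graded ideal. The only real computation is that, for $a_g \in I \cap D_g$ and $b_h \in D_h$, the product $(b_h\delta_h)(a_g\delta_g) = \alpha_h(\alpha_{h^{-1}}(b_h)a_g)\delta_{hg}$ lies in $(I \cap D_{hg})\delta_{hg}$: membership in $D_{hg}$ comes from axiom (ii), namely $\alpha_h(D_{h^{-1}} \cap D_g) = D_h \cap D_{hg}$, while membership in $I$ follows since $\alpha_{h^{-1}}(b_h)a_g \in I \cap D_{h^{-1}}$ and $I$ is $G$-invariant; right multiplication is symmetric. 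Because $\mathcal{I}(I) \cap (A\delta_e) = I\delta_e$ is non-zero, and is proper whenever $I \neq A$, graded simplicity forces $\mathcal{I}(I) = \SkewRing$ and hence $I = A$, so $A$ is $G$-simple.

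For the converse, assume $A$ is $G$-simple and let $\mathcal{I}$ be a non-zero graded ideal; the goal is $\mathcal{I} = \SkewRing$. The crux, and the place where the local-unit hypothesis does all the work (compensating for the fact that the symbols $\delta_g$ are not genuine elements), is to prove that $I_e$ is a non-zero $G$-invariant ideal of $A$. That $I_e$ is an ideal is immediate. For $G$-invariance, given $b \in I_e \cap D_{g^{-1}}$ I would pick a local unit $u \in D_g$ with $u\alpha_g(b) = \alpha_g(b)$ and a local unit $w \in D_{g^{-1}}$ with $bw = b$, and compute $(u\delta_g)(b\delta_e)(w\delta_{g^{-1}}) = \alpha_g(b)\delta_e \in \mathcal{I}$, using $\alpha_g \circ \alpha_{g^{-1}} = \id$ on $D_g$; thus $\alpha_g(b) \in I_e$. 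To see $I_e \neq 0$, take $0 \neq a\delta_g \in \mathcal{I}$ and use surjectivity of $\alpha_g$ to choose $v \in D_{g^{-1}}$ with $\alpha_g(v)a = a$, so that $(v\delta_{g^{-1}})(a\delta_g) = \alpha_{g^{-1}}(a)\delta_e$ is a non-zero element of $I_e\delta_e$ (injectivity of $\alpha_{g^{-1}}$ guarantees it is non-zero).

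With $I_e$ a non-zero $G$-invariant ideal, $G$-simplicity yields $I_e = A$. It then remains to upgrade this to $I_g = D_g$ for every $g$: given $c \in D_g$, the local units of $A = D_e$ furnish $u \in A$ with $uc = c$, and since $u\delta_e \in \mathcal{I}$ we get $(u\delta_e)(c\delta_g) = c\delta_g \in \mathcal{I}$. Hence $\mathcal{I} = \SkewRing$, establishing graded simplicity. I expect the main obstacle to be the $G$-invariance verification for $I_e$: one must manipulate the ``would-be'' elements $\delta_g$ through carefully chosen local units and repeatedly invoke $\alpha_g \circ \alpha_{g^{-1}} = \id$ on $D_g$ together with axiom (ii), making sure at each step that the intermediate product lands in the correct $D_{(\cdot)}$ before the partial map is applied, and that the associativity granted by the local units is used to regroup the triple products.
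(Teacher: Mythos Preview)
Your proof is correct and follows essentially the same approach as the paper's. Both directions proceed identically in substance: for ``graded simple $\Rightarrow$ $G$-simple'' you and the paper both form $\bigoplus_g (I\cap D_g)\delta_g$ and read off $I=A$ from its degree-$e$ component; for the converse you both show $J_e = J\cap A\delta_e$ is a non-zero $G$-invariant ideal by sandwiching with suitable local-unit elements, then use $A\subseteq J$ to get $J=\SkewRing$. The only cosmetic differences are that you make the three-factor product $(u\delta_g)(b\delta_e)(w\delta_{g^{-1}})$ explicit for $G$-invariance (the paper writes the terser $c_g\delta_g\, a\,\delta_{g^{-1}}$), you multiply on the left rather than the right to exhibit a non-zero element of $J_e$, and you spell out the final step $I_g=D_g$ via $(u\delta_e)(c\delta_g)$, which the paper leaves implicit.
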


\begin{proof}
We begin by showing the ''only if'' statement. Suppose that $\SkewRing$ is graded simple.
Let $I$ be a non-zero $G$-invariant ideal of $A$.
Define $I \star_\alpha G$ to be the set 
of all finite sums of the form $\sum_{g\in G} a_g \delta_g$, where $a_g\in I \cap D_g$, for $g\in G$.
Note that $I \star_\alpha G$ is a non-zero two-sided graded ideal of $\SkewRing$.
Hence, $I \star_\alpha G = \SkewRing$. In particular, 
$A \delta_e \subseteq I \star_\alpha G$ which shows that $I \subseteq A \subseteq I$.
We conclude that $I=A$. Thus, $A$ is $G$-simple.

Now we show the ''if'' statement. Suppose that $A$ is $G$-simple.
Let $J$ be a non-zero graded ideal of $\SkewRing$.
We claim that $J_e = J \cap A$ is a non-zero $G$-invariant ideal of $A$. 
If we assume that the claim holds, then 
$A = J_e = A \cap J \subseteq J$ from which it follows that $J = \SkewRing$.
Now we show the claim.
First we show that $J_e$ is non-zero.
Since $J$ is non-zero, there is $g \in G$
and a non-zero $a_g \in D_g$ with $a_g \delta_g \in J$.
Let $b_{g^{-1}} \in D_{g^{-1}}$ be a local 
unit for $\alpha_{g^{-1}}(a_g)$.
Then
\begin{displaymath}
	J \ni a_g \delta_g b_{g^{-1}} \delta_{g^{-1}} =
\alpha_g( \alpha_{g^{-1}}(a_g) b_{g^{-1}} ) \delta_e =
\alpha_g( \alpha_{g^{-1}}(a_g)) \delta_e =
a_g \delta_e
\end{displaymath}
which is non-zero.
Now we show that $J_e$ is $G$-invariant.
Take $g\in G$ and $a \in J_e \cap D_{g^{-1}}$.  
Let $c_g\in D_g$ be such that $\alpha_{g^{-1}}(c_g)$ is a local unit for $a$.
Then
$\alpha_g(a)\delta_e=\alpha_g(\alpha_{g^{-1}}(c_g) a )\delta_e 
= c_g \delta_g a \delta_{g^{-1}} \in J$.
\end{proof}

\begin{rem}
Note that, even if there is some $g\in G$ such that $D_g$ does not have local units,
the first half of the above proposition still holds, as long as $\SkewRing$ is associative.
That is, graded simplicity of $\SkewRing$ implies $G$-simplicity of $A$.
In particular, simplicity of $\SkewRing$ implies $G$-simplicity of $A$.
\end{rem}

\begin{defn}\label{defnouter}
Consider $A$ as a semigroup with 
respect to multiplication.
If $g \in G$, then we say that $\alpha_g$ is inner at 
an idempotent $u \in A$
if it is \emph{inner at $u$} in the sense of Definition \ref{cornerinner}.
Moreover, we say that $\alpha$ is {\it outer} (or \emph{outer at $u$})
if there is a non-zero idempotent $u \in A$ such that
for each non-identity $g \in G$,
the map $\alpha_g$ is outer at $u$ in the sense
of Definition \ref{cornerinner}.
We say that $\alpha$ is {\it strongly outer}
if for every non-identity $g \in G$,
the map $\alpha_g$ is strongly outer
in the sense
of Definition \ref{cornerinner}.
\end{defn}

\begin{rem}\label{coincides}
Suppose that $A$ is unital and
that $\alpha : G \rightarrow \Aut(A)$
is a global action.
Then $\alpha$ is outer in the classical sense
if and only if it is outer in our sense, i.e. in the sense of Definition \ref{defnouter}.
This follows from Proposition \ref{proppartialorder}
and the fact that $u \leq 1$ holds for any idempotent $u$ of $A$.
\end{rem}

Suppose that $\beta$ is a global action of a group $G$ on a ring $B$
and that $A$ is an ideal of $B$.
If we, for each $g\in G$, define $D_g = A \cap \beta_g(A)$
and $\alpha_g(x)=\beta_g(x)$ for $x\in D_{g^{-1}}$,
then it is easily verified that $\alpha$ is a partial action
of $G$ on $A$.
In this situation, $\alpha$ is referred to as a \emph{restriction} of $\beta$,
and
$\beta$ is referred to as a \emph{globalization} of $\alpha$.
(See e.g. \cite{DokuchaevExel05,DokuchaevRio05}.)

\begin{prop}\label{OuterGlobalization}
Let $\alpha$ be a partial action of a group $G$ on a ring $A$
and suppose that $\alpha$ has a globalization $\beta$ (on a ring $B$).
The following two assertions hold:
\begin{itemize}
	\item[(a)] If $u$ is a non-zero idempotent of $A$, then, for $g\in G$,
	the map $\alpha_g$ is inner at $u$ if and only if $\beta_g$ is inner at $u$;
	\item[(b)] If $\alpha$ is outer, then $\beta$ is outer.
	Moreover, if $B$ is unital, then $\beta$ is outer in the classical sense.
\end{itemize}
\end{prop}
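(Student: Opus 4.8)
The plan is to reduce the whole statement to part (a), since (b) will follow quickly once (a) is available. Throughout I would work with the description of innerness coming from the remark following Definition \ref{cornerinner}: for a semigroup homomorphism $\alpha$ of ideals, being inner at $u$ means $u$ lies in the domain and the defining isomorphism of right ideals produces, via Proposition \ref{Hom}, a pair $a \in uS\alpha(u)$ and $b \in \alpha(u)Su$ with $ab = u$, $ba = \alpha(u)$ and $\alpha(x) = bxa$ for all $x \in uSu$; conversely any such corner pair witnesses innerness. The corner membership is the crucial normalisation $ua = a$ and $bu = b$. Two elementary lemmas will drive everything. First, for any idempotent $u$ of $A$ one has $uBu = uAu$: the inclusion $uBu \subseteq A$ holds because $A$ is an ideal of $B$, and every element of $uBu$ is visibly fixed by the map $u(\cdot)u$, hence lies in $uAu$. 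Second, for $u$ an idempotent of $A$ one has $u \in D_{g^{-1}}$ if and only if $\beta_g(u) \in A$ (apply $\beta_g$ to the defining equality $D_{g^{-1}} = A \cap \beta_{g^{-1}}(A)$), in which case $\alpha_g(u) = \beta_g(u)$ and $uAu \subseteq D_{g^{-1}}$, so that $\alpha_g$ and $\beta_g$ coincide on all of $uBu = uAu$.

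For the forward implication of (a), I would assume $\alpha_g$ is inner at $u$ inside the semigroup $A$. Then $u \in D_{g^{-1}}$ by definition, and I obtain corner elements $a,b \in A$ with $ab = u$, $ba = \alpha_g(u) = \beta_g(u)$ and $\alpha_g(x) = bxa$ on $uAu$. Regarding $a,b$ as elements of $B$, the containments $uA\alpha_g(u) \subseteq uB\beta_g(u)$ and $\alpha_g(u)Au \subseteq \beta_g(u)Bu$ keep them in the right corners, and the identity $uBu = uAu \subseteq D_{g^{-1}}$ together with $\beta_g|_{D_{g^{-1}}} = \alpha_g$ shows $\beta_g(x) = bxa$ on $uBu$; hence $\beta_g$ is inner at $u$. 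For the converse I would take corner elements $a,b \in B$ witnessing innerness of $\beta_g$ at $u$. The normalisation now does the work: from $ua = a$ and $bu = b$ with $u \in A$ and $A$ an ideal of $B$ one gets $a,b \in A$, whence $\beta_g(u) = ba \in A$. By the second lemma this forces $u \in D_{g^{-1}}$, so $\alpha_g(u) = \beta_g(u)$ and $\alpha_g(x) = \beta_g(x) = bxa$ holds on $uAu = uBu$; thus $\alpha_g$ is inner at $u$. Passing to contrapositives, $\alpha_g$ is outer at $u$ precisely when $\beta_g$ is.

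Part (b) is then essentially immediate. Fixing a non-zero idempotent $u \in A$ witnessing outerness of $\alpha$, part (a) yields that $\beta_g$ is outer at $u$ for every non-identity $g$, and since $u$ is a non-zero idempotent of $B$ this is exactly outerness of $\beta$ in the sense of Definition \ref{defnouter}. If moreover $B$ is unital, suppose some $\beta_g$ with $g \neq e$ were inner in the classical sense, i.e. inner at $1_B$ by part (c) of the remark following Definition \ref{cornerinner}. Since $u \leq 1_B$, Proposition \ref{proppartialorder} would propagate innerness down to $u$, contradicting the outerness of $\beta_g$ at $u$ just established. Hence $\beta$ is outer in the classical sense.

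I expect the only real obstacle to be the degenerate situation in the converse of (a), namely ruling out that $\beta_g$ could be inner at an idempotent $u$ for which $\alpha_g(u)$ is not even defined (so that the equation $\alpha_g(x) = bxa$ has no meaning a priori). The resolution is the observation that the corner normalisation $ua = a$, $bu = b$ automatically relocates the witnessing elements from $B$ into the ideal $A$, and this membership in turn forces $u \in D_{g^{-1}}$; every remaining step is bookkeeping resting on the two lemmas $uBu = uAu$ and the agreement of $\alpha_g$ with $\beta_g$ on this corner.
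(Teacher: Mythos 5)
Your proposal is correct and takes essentially the same route as the paper's own proof: both directions of (a) are argued through the corner-pair characterization of innerness, with the decisive point in the converse being that the normalisations $ua=a$, $bu=b$ relocate the witnesses into the ideal $A$ and thereby force $u \in D_{g^{-1}}$ (the paper packages this as $u=\beta_{g^{-1}}(bua)$ with $bua\in A$, which is the same computation since $bua=ba=\beta_g(u)$). Part (b), including the unital case via $u\leq 1_B$ and Proposition \ref{proppartialorder}, coincides with the paper's argument.
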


\begin{proof}
(a): We first show the contrapositive of the ''if'' statement.
Suppose that $\beta_g$ is inner at $u$.
There are elements $a \in u B \beta_g(u)$ and $b \in \beta_g(u) B u$, satisfying $ab=u$ and $ba=\beta_g(u)$,
such that $\beta_g(x)=bxa$ holds for each $x\in uBu$.
Note that $bua \in A$, since $A$ is an ideal of $B$, and
that $u=\beta_{g^{-1}}(\beta_g(u))=\beta_{g^{-1}}(ba)=\beta_{g^{-1}}(bua)$.
This shows that $u \in D_{g^{-1}}$.
For any $x\in D_{g^{-1}} \cap uBu$ we have that
$\alpha_g(x)=\beta_g(x)=bxa$.
In particular, $\alpha_g(u)=\beta_g(u)$.
Now, define $a'=ua \in u A \alpha_g(u)$ and $b'=bu \in \alpha_g(u) A u$.
It is easy to see that $a'b'=u$ and $b'a'=\alpha_g(u)$.
From the fact that $\beta_{g^{-1}}(A) \ni u$ is an ideal of $B$
we get that $uAu \subseteq uBu \subseteq D_{g^{-1}}$.
We conclude that $\alpha_g(x)=b'xa'$ holds for any $x\in uAu$.
This shows that $\alpha_g$ is inner at $u$.

We now show the contrapositive of the ''only if'' statement. Suppose that $\alpha_g$ is inner at $u$. There are elements $a \in uA\alpha_g(u)$ and $b \in \alpha_g(u)Au$, satisfying $ab=u$ and $ba=\alpha_g(u)$, such that $\alpha_g(x)=bxa$ holds for each $x\in uAu$.
Using that $\alpha$ is a restriction of $\beta$,
we know that $\alpha_g(x)=\beta_g(x)$ holds for each $x\in D_{g^{-1}}$. Note that $uAu=uBu$, since $u$ is an idempotent of $A$ which is an ideal of $B$. Hence, $uBu \subseteq D_{g^{-1}}$ and we conclude that
$\beta_g(x)=\alpha_g(x)=bxa$ holds for each $x\in uBu$.
In particular, $\beta_g(u)=\alpha_g(u)$ which makes it easy to see that $a$ and $b$ have the desired properties.
This shows that $\beta_g$ is outer at $u$.

(b): Suppose that $\alpha$ is outer.
There is a non-zero idempotent $u\in A$ such that
for each non-identity $g\in G$, the map $\alpha_g$ is outer at $u$.
It now follows immediately from (a) that, for each non-identity $g\in G$,
the map $\beta_g$ is outer at $u$. This shows that $\beta$ is outer.
For the proof of the last part,
we assume that $B$ is unital.
Seeking a contradiction, suppose that $\beta$ is not outer (in the classical sense).
Then there is a non-identity $g\in G$ such that the automorphism $\beta_g : B \to B$ is inner at $1$.
Since $u\leq 1$, Proposition \ref{proppartialorder} yields that $\beta_g$ is inner at $u$,
which is a contradiction.
\end{proof}

\begin{rem}
Note that Proposition \ref{OuterGlobalization} does not make use of
the assumption, on the existence of local units, that is made in the beginning of Section \ref{sectionpartialactions}.
\end{rem}

\begin{rem}
Note that the converse of Proposition \ref{OuterGlobalization}(b) does not hold in general.
In light of Remark \ref{rem:innerlocalglobal},
we want to
underline
that
even if $\alpha_g$, for some $g\in G$, is inner at an idempotent of $A$,
it is fully possible for the globalization $\beta$ to be outer (in the classical sense).
In fact, $\beta$ could potentially be outer at any idempotent, as long as the idempotent lies outside of $A$.
\end{rem}

\noindent
{\bf Proof of Theorem \ref{generalizationcrow}.}
The ''only if'' statement follows from 
Proposition \ref{SimpleImpliesGSimple}
and the fact that graded simplicity is a necessary condition for simplicity.
Now we show the ''if'' statement.
Suppose that $A$ is a $G$-simple ring.
Let $u$ be a non-zero idempotent of $A$ such that
for each non-identity $g \in G$,
the map $\alpha_g$ is outer at $u$.
Put $S = (u \delta_e) (\SkewRing) (u \delta_e).$
By Theorem \ref{gather}, we are done if we can show that
$Z(S)$ is a field.
Let $(u \delta_e) ( \sum_{g \in G} a_g \delta_g ) (u \delta_e)$
be a non-zero element of $Z(S)$,
where $a_g \in D_g$ is zero for all but finitely many $g \in G$.
Fix $g \in G$ so that $(u \delta_e)(a_g \delta_g)(u \delta_e) \neq 0$. 
Since $G$ is abelian,
we get that $(u \delta_e)(a_g \delta_g)(u \delta_e) \in Z(S)$.
Since $\SkewRing$ is graded simple, it is easy to see
that $S$ is also graded simple.
Therefore, the graded ideal of $S$
generated by  $(u \delta_e)(a_g \delta_g)(u \delta_e)$ 
equals $S$.
So, in particular, there is $k \in D_{g^{-1}}$
such that
\begin{equation}\label{abelian}
(u \delta_e) (a_g \delta_g) (u \delta_e) 
(k \delta_{g^{-1}}) (u \delta_e) = u \delta_e
\end{equation}
which is equivalent to the following four equivalent equations
\begin{align*}
( u a_g \delta_g ) ( u k \delta_{g^{-1}} ) (u \delta_e) = u \delta_e
&\Longleftrightarrow
(\alpha_g( \alpha_{g^{-1}}(u a_g) uk ) \delta_e) (u \delta_e) = u \delta_e \\
&\Longleftrightarrow
(u a_g \alpha_g(uk) \delta_e) (u \delta_e) = u \delta_e \\
&\Longleftrightarrow
u a_g \alpha_g(uk) u \delta_e = u \delta_e
\end{align*}
which finally gives us that
\begin{equation}\label{abelian1}
u a_g \alpha_g(uk) u = u.
\end{equation}
Note that Equation \eqref{abelian1} implies that $u \in D_g$.
Since $(u \delta_e)(a_g \delta_g)(u \delta_e) \in Z(S)$,
we can change the order of the factors on the 
left-hand side of Equation \eqref{abelian} and obtain the following three equivalent equations
\begin{align*}
(u \delta_e) (k \delta_{g^{-1}})  (u \delta_e) 
(a_g \delta_g) (u \delta_e) = u \delta_e
&\Longleftrightarrow
(uk \delta_{g^{-1}}) (u a_g \delta_g) (u \delta_e) 
= u \delta_e \\
&\Longleftrightarrow
\alpha_{g^{-1}}( \alpha_g(uk) u a_g ) \delta_e (u \delta_e) = u \delta_e
\end{align*}
which are equivalent to
\begin{equation}\label{abelian2}
\alpha_{g^{-1}}( \alpha_g(uk) u a_g ) u = u.
\end{equation}
Note that Equation \eqref{abelian2} implies that $u\in D_{g^{-1}}$,
and therefore
\begin{equation}\label{abelian3}
\alpha_g(uk) u a_g \alpha_g(u) = \alpha_g(u).
\end{equation}
Using again that $u \in D_{g^{-1}}$,
we may rewrite Equation \eqref{abelian1} and Equation \eqref{abelian3} as
\begin{equation}\label{abelian4}
u a_g \alpha_g(u) \alpha_g(u) \alpha_g(k) u = u
\end{equation}
and
\begin{equation}\label{abelian5}
\alpha_g(u) \alpha_g(k) u u a_g \alpha_g(u) = \alpha_g(u)
\end{equation}
respectively.
Furthermore, for every $b \in A$, the following three equivalent equations hold
\begin{align*}
(u \delta_e) (a_g \delta_g) (u \delta_e) (b \delta_e) (u \delta_e) =
(u \delta_e) (b \delta_e) (u \delta_e) (a_g \delta_g) (u \delta_e) \\
\Longleftrightarrow
(u a_g \delta_g) (ubu \delta_e) = 
(ubu \delta_e) ( \alpha_g( \alpha_{g^{-1}}(u a_g) u ) \delta_g ) \\
\Longleftrightarrow
\alpha_g( \alpha_{g^{-1}}(u a_g) ubu ) \delta_g  =
ubu a_g \alpha_g(u) \delta_g.
\end{align*}
The last equation yields
\begin{displaymath}
	u a_g \alpha_g(u) \alpha_g(ubu) = ubu a_g \alpha_g(u).
\end{displaymath}
By Equation \eqref{abelian5}, the last equation implies that
\begin{displaymath}
	\alpha_g(ubu) = \alpha_g(u) \alpha_g(k) u ubu u a_g \alpha_g(u)
\end{displaymath}
which shows that $\alpha_g$ is inner at $u$.
But since $\alpha_g$ is outer, at $u$, 
for non-identity $g \in G$, we conclude that $g=e$.
Hence, finally, by Equation \eqref{abelian}, we get that $Z(S)$ is a field.
\qed

\begin{rem}
We shall now make a couple of important observations.
\begin{itemize}
	\item[(a)] Outerness is not a necessary condition for simplicity of a partial skew group ring $\SkewRing$.
Indeed, consider the simple skew group ring $M_2(\R) \rtimes_\sigma \Z/2\Z$ in \cite[Example 4.1]{oinertarxiv11}.
	
	\item[(b)] Theorem \ref{generalizationcrow} does not hold for arbitrary (non-abelian) groups.
Indeed, consider \cite[Example 5.1]{oinertarxiv11}
where $X=S^1$ is the circle, $G=\Homeo(S^1)$ is the group of all homeomorphisms of $S^1$.
One may define $\sigma : G \to \Aut(C(X))$ in the usual way.
It then turns out that $C(X)$ is $G$-simple and that the action is outer. However, the skew group ring
$C(X) \rtimes_\sigma G$ is not simple.
\end{itemize}
\end{rem}

\section{An Application to Set Dynamics}\label{Sec:SetDynamics}

At the end of this section, we use Theorem \ref{generalizationcrow}
to show Theorem \ref{generalizationgoncalvesfinite}.

\begin{assumption}
Throughout this section, 
$B$ denotes a simple associative ring which has local units,
$\theta$ denotes a partial action of a group $G$
on a non-empty set $X$, and the corresponding subsets of $X$ 
are denoted by $X_g$, for $g\in G$. 
\end{assumption}

\begin{defn}\label{definitionfinitespace}
We let $F_0(X,B)$ denote the set of
functions $X \rightarrow B$ with finite support.
For each $g \in G$, let $D_g$ denote the set of 
$f \in F_0(X,B)$ such that $f(x) = 0$
for all $x \in X \setminus X_g$.
It is clear that $D_g$ is an ideal of $F_0(X,B)$
and that the map 
\begin{displaymath}
	G \ni g \mapsto 
(\alpha_g : D_{g^{-1}} \rightarrow D_g),
\end{displaymath}
defined 
by $\alpha_g(f) = f \circ \theta_{g^{-1}}$,
for $f \in D_{g^{-1}}$,
defines a partial action of $G$ on $F_0(X,B)$. 
\end{defn}

\begin{rem}
For each subset $S$ of $X$
and each 
$b \in B$, 
let $b_S$ denote the function
$X \rightarrow B$ defined by $b_S(x) = b$,
if $x \in S$, and $b_S(x)=0$, otherwise.
If $S = \{ y \}$ for some $y \in X$,
and $b \in B$, then we let $b_{ S }$ be denoted by $b_y$.
It is clear that for each $g \in G$, 
the set of $\epsilon_S$, where
$S$ is a finite subset of $X_g$ and
$\epsilon$ is a local unit in $B$ 
is a set of local units for $D_g$.
In particular, 
\begin{displaymath}
	E = \{ \epsilon_S \mid 
\mbox{$S$ is a finite subset of $X_g$ and
$\epsilon$ is a local unit in $B$} \}
\end{displaymath}
is a set of local units for $F_0(X,B)$. 
\end{rem}

For future reference we record the following result.

\begin{prop}\label{faithfulminimalimpliesfree}
If $\alpha$ is a partial action of an abelian group $G$ on a set
(Hausdorff topological space) $X$ such that $\alpha$ is faithful
and (topologically) minimal, then $\alpha$ is free.
\end{prop}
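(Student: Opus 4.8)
The plan is to reduce the statement to a single claim about fixed-point sets. Fix a non-identity $g \in G$ and consider $\mathrm{Fix}(g) = \{ x \in X_{g^{-1}} : \alpha_g(x) = x \}$. The whole argument rests on showing that $\mathrm{Fix}(g)$ is $G$-invariant, after which faithfulness and minimality finish the proof quickly. Freeness is precisely the assertion that $\mathrm{Fix}(g) = \emptyset$ for every non-identity $g$, so it suffices to rule out $\mathrm{Fix}(g) \neq \emptyset$.

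The key step, and the main obstacle, is the following claim: if $x \in \mathrm{Fix}(g)$ and $x \in X_{h^{-1}}$ for some $h \in G$, then $\alpha_h(x) \in \mathrm{Fix}(g)$. First I would record that a fixed point $x$ of $\alpha_g$ satisfies $x \in X_g$ (since $x = \alpha_g(x)$ lies in the range of $\alpha_g$) and $\alpha_{g^{-1}}(x) = x$. The delicate part is a domain computation: I want to apply axiom (iii) to conclude $\alpha_g(\alpha_h(x)) = \alpha_{gh}(x)$, which requires $x \in X_{h^{-1}} \cap X_{(gh)^{-1}}$. Membership in $X_{h^{-1}}$ is given, and membership in $X_{(gh)^{-1}}$ is obtained by applying axiom (ii) to $\alpha_{g^{-1}}$ on $X_g \cap X_{h^{-1}}$: this gives $x = \alpha_{g^{-1}}(x) \in X_{g^{-1}} \cap X_{g^{-1}h^{-1}}$, and here commutativity of $G$ is essential, as it identifies $X_{g^{-1}h^{-1}}$ with $X_{(gh)^{-1}}$. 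With the domain condition in hand, axiom (iii) yields both $\alpha_h(x) \in X_{g^{-1}}$ and $\alpha_g(\alpha_h(x)) = \alpha_{gh}(x)$. Commutativity then gives $\alpha_{gh}(x) = \alpha_{hg}(x) = \alpha_h(\alpha_g(x)) = \alpha_h(x)$, using axiom (iii) once more (its domain condition holding for the same reason), so $\alpha_h(x) \in \mathrm{Fix}(g)$. This proves $\alpha_h(\mathrm{Fix}(g) \cap X_{h^{-1}}) \subseteq \mathrm{Fix}(g)$, that is, $G$-invariance.

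In the purely set-theoretic case, minimality now forces $\mathrm{Fix}(g) \in \{ \emptyset, X \}$. If $\mathrm{Fix}(g) = X$, then $X_{g^{-1}} = X$ and $\alpha_g = \mathrm{id}_X$, contradicting faithfulness for the non-identity element $g$; hence $\mathrm{Fix}(g) = \emptyset$, and $\alpha$ is free.

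For the topological case I would instead work with the closure $\overline{\mathrm{Fix}(g)}$. Since each $\alpha_h$ is a homeomorphism of the open set $X_{h^{-1}}$ onto the open set $X_h$, $G$-invariance of $\mathrm{Fix}(g)$ passes to its closure, using that $X_{h^{-1}}$ is open to see that every point of $\overline{\mathrm{Fix}(g)} \cap X_{h^{-1}}$ lies in the closure of $\mathrm{Fix}(g) \cap X_{h^{-1}}$ and then applying continuity of $\alpha_h$. Thus $\overline{\mathrm{Fix}(g)}$ is a $G$-invariant closed set, so topological minimality gives $\overline{\mathrm{Fix}(g)} \in \{ \emptyset, X \}$. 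Since $X$ is Hausdorff, the non-fixed set $\{ x \in X_{g^{-1}} : \alpha_g(x) \neq x \}$ is open in $X_{g^{-1}}$, hence open in $X$, and it is non-empty by faithfulness; being disjoint from $\mathrm{Fix}(g)$, it prevents $\mathrm{Fix}(g)$ from being dense, so $\overline{\mathrm{Fix}(g)} \neq X$. Therefore $\overline{\mathrm{Fix}(g)} = \emptyset$, giving $\mathrm{Fix}(g) = \emptyset$ and freeness. The main difficulty throughout is the domain bookkeeping in the invariance claim, where commutativity is used exactly to secure the hypotheses of axiom (iii).
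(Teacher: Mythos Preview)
Your argument is correct and follows essentially the same strategy as the paper: fix a non-identity $g$, show the fixed-point set $F_g$ is $G$-invariant using axioms (ii)--(iii) together with commutativity of $G$, then use faithfulness to rule out $F_g = X$ and (topological) minimality to force $F_g = \emptyset$. You are more explicit than the paper about the domain bookkeeping needed to invoke axiom (iii), and in the topological case you work with $\overline{\mathrm{Fix}(g)}$ rather than asserting directly (as the paper does) that $F_g$ is closed in $X$; your route via the closure is arguably more careful, since without a clopen hypothesis on $X_{g^{-1}}$ the set $F_g$ is a priori only closed in $X_{g^{-1}}$, but the underlying idea is the same.
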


\begin{proof}
Take a non-identity $g \in G$ and consider the set
\begin{displaymath}
	F_g = \{ x \in X_{g^{-1}} \mid \alpha_g(x) = x \}.
\end{displaymath}
We need to show that $F_g$ is empty.
Take $h \in G$ and $x \in F_g \cap X_{h^{-1}}$.
By the relations (ii)-(iii) in the definition of a partial action,
and the fact that $G$ is abelian, we get that
$\alpha_h(x) = \alpha_h (\alpha_g (x) ) =
\alpha_{hg}(x) = \alpha_{gh}(x) = \alpha_g( \alpha_h(x) )$.
Thus, $F_g$ is $G$-invariant (and
closed since $X$ is Hausdorff).
Since $\alpha$ is faithful, we get that $F_g \neq X$.
Hence, we get that $F_g = \emptyset$.
Thus, $\alpha$ is free.
\end{proof}

\begin{prop}\label{minimalequivalentGsimplefinite}
$\theta$ is minimal if and only if 
$F_0(X,B)$ is $G$-simple.
\end{prop}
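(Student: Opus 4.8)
The plan is to establish a bijective correspondence between the $G$-invariant subsets of $X$ and the $G$-invariant ideals of $F_0(X,B)$, realized by $Y \mapsto F_0(Y,B) := \{ f \in F_0(X,B) \mid \supp(f) \subseteq Y \}$. Both implications then fall out of this correspondence, so the real content is a single saturation lemma powered by the simplicity of $B$.

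For the ''if'' direction, I would start with a non-empty $G$-invariant subset $Y \subseteq X$ and consider the ideal $I = F_0(Y,B)$. It is non-zero since $Y \neq \emptyset$, and it is manifestly an ideal because multiplication in $F_0(X,B)$ is pointwise. To see that $I$ is $G$-invariant, take $f \in I \cap D_{g^{-1}}$, so that $\supp(f) \subseteq Y \cap X_{g^{-1}}$; then $\alpha_g(f) = f \circ \theta_{g^{-1}}$ has support contained in $\theta_g(Y \cap X_{g^{-1}}) \subseteq Y$, whence $\alpha_g(f) \in I$. Assuming $F_0(X,B)$ is $G$-simple forces $I = F_0(X,B)$, and if some $x \notin Y$ existed then $\epsilon_x$ (for a local unit $\epsilon \in B$) would lie in $I = F_0(Y,B)$, a contradiction; hence $Y = X$ and $\theta$ is minimal.

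For the ''only if'' direction, assume $\theta$ is minimal, let $I$ be a non-zero $G$-invariant ideal, and set $Y = \bigcup_{f \in I} \supp(f)$, which is non-empty. The saturation step is to show $I = F_0(Y,B)$: given $x \in Y$, pick $f \in I$ with $f(x) \neq 0$ and multiply by $\epsilon_x$ (a local unit $\epsilon$ for $f(x)$) to land the single-point function $f(x)_x$ in $I$; then, since $B$ is simple with local units, the two-sided ideal $B f(x) B$ equals $B$, so writing any $c \in B$ as $\sum_i s_i f(x) t_i$ and using $(s_i)_x\, f(x)_x\, (t_i)_x = (s_i f(x) t_i)_x$ yields $c_x \in I$ for every $c$. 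Summing over points of $Y$ gives $I = F_0(Y,B)$. Finally I would verify $Y$ is $G$-invariant: for $x \in Y \cap X_{g^{-1}}$ and $c \neq 0$, the element $c_x \in I \cap D_{g^{-1}}$ satisfies $\alpha_g(c_x) = c_{\theta_g(x)} \in I$, so $\theta_g(x) \in Y$. Minimality then gives $Y = X$ and thus $I = F_0(X,B)$.

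The main obstacle is the saturation lemma $I = F_0(Y,B)$ in the second direction: everything else is a direct transcription of invariance between the set level and the ideal level, but recovering all of $F_0(Y,B)$ from a single non-vanishing value at each point of $Y$ is exactly where the hypotheses that $B$ is \emph{simple} and has \emph{local units} are indispensable, via the identity $BbB = B$ for $b \neq 0$.
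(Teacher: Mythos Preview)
Your proof is correct and follows essentially the same route as the paper's, which likewise uses the simplicity of $B$ to show that once a $G$-invariant ideal contains some $c_x$ with $c \neq 0$ it must contain every $b_x$. The only cosmetic difference is that the paper works with the common zero set $N_I = \bigcap_{f \in I} f^{-1}(\{0\})$ and the vanishing ideal $I_Y = \{ f : f|_Y = 0 \}$---the complements of your support set $Y$ and your ideal $F_0(Y,B)$---and phrases both directions contrapositively.
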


\begin{proof}
Suppose that $F_0(X,B)$ is not $G$-simple.
Then there is a non-trivial  
$G$-invariant ideal $I$ of $F_0(X,B)$.
Let $N_I = \bigcap_{f \in I} f^{-1}( \{ 0 \} )$.
Since $I$ is $G$-invariant
the same is true for $N_I$.
Since $I$ is non-zero, it follows that $N_I$
is a proper subset of $X$.
Seeking a contradiction, suppose that $N_I$ is empty.
Take $x \in X$ and $b \in B$.
We claim that $b_x \in I$.
If we assume that the claim holds, then,
since the set of $b_x$, for $x \in X$ and $b \in B$,
generates $F_0(X,B)$, we will get the contradiction $I = F_0(X,B)$.
Now we show the claim.
From $N_I = \emptyset$, it follows that 
there is a non-zero $c \in B$ such that $c_x \in I$.
By simplicity of $B$,
there is a natural number $n$ and
$d^{(1)},\ldots,d^{(n)},
d'^{(1)},\ldots,d'^{(n)} \in B$
such that 
$b = \sum_{i=1}^n d^{(i)} c d'^{(i)}$.
But then $b_x = \sum_{i=1}^n d^{(i)}_x c_x d'^{(i)}_x \in I$
which shows the claim.
Therefore, $N_I$ is a non-empty $G$-invariant subset of $X$, 
and hence $\theta$ is not minimal.

Now suppose that $\theta$ is not minimal.
Let $Y$ be a non-trivial $G$-invariant subset of $X$.
Let $I_Y$ denote the ideal of $F_0(X,B)$ 
consisting of all $f \in F_0(X,B)$
that vanish on $Y$. 
Since $Y$ is $G$-invariant it follows that $I_Y$ 
is $G$-invariant.
Using that $\emptyset \neq Y \neq X$, we conclude that
$I_Y$ is a non-zero proper ideal of $F_0(X,B)$.
Thus, $F_0(X,B)$ is not $G$-simple.
\end{proof}

\begin{prop}\label{faithfulequivalentinjectivefinite}
If $\alpha$ is injective, then $\theta$ is faithful.
\end{prop}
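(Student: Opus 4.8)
The plan is to argue by contraposition, showing that if $\theta$ is not faithful, then $\alpha$ is not injective. Unravelling the definition of faithfulness, non-faithfulness of $\theta$ means there exists a non-identity $g \in G$ such that $\theta_g(x) = x$ for every $x \in X_{g^{-1}}$, i.e. $\theta_g = \id_{X_{g^{-1}}}$. My goal is to feed this back into the induced partial action $\alpha$ on $F_0(X,B)$ and exhibit a non-zero ideal $D_g \cap D_{g^{-1}}$ on which $\alpha_g$ is the identity.

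First I would record the consequences of the fixed-point condition at the level of the sets $X_g$. Since $\theta_g : X_{g^{-1}} \to X_g$ is a bijection onto $X_g$ that acts as the identity, its image is $X_{g^{-1}}$ itself, so $X_g = X_{g^{-1}}$; consequently the inverse bijection $\theta_{g^{-1}} : X_g \to X_{g^{-1}}$ is likewise the identity on $X_g = X_{g^{-1}}$. This forces $D_g = D_{g^{-1}}$, and hence $D_g \cap D_{g^{-1}} = D_g$. Transporting this to $\alpha$ is then immediate: for any $f \in D_g \cap D_{g^{-1}} = D_g$, the definition of $\alpha_g$ gives $\alpha_g(f) = f \circ \theta_{g^{-1}}$, and since $\theta_{g^{-1}}$ is the identity on $X_g$ while $f$ vanishes outside $X_g$, a pointwise check yields $\alpha_g(f) = f$. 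Thus $\alpha_g|_{D_g \cap D_{g^{-1}}} = \id_{D_g \cap D_{g^{-1}}}$.

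The remaining, and I expect only genuinely delicate, step is to verify that $D_g \cap D_{g^{-1}}$ is non-zero, which is precisely the clause that the definition of injectivity requires. This comes down to ensuring the witnessing element $g$ has $X_{g^{-1}} \neq \emptyset$: granting this, one picks any $x \in X_{g^{-1}}$ and any non-zero $b \in B$ (which exists since $B$ is simple, hence non-zero) to obtain a non-zero element $b_x \in D_g \cap D_{g^{-1}}$. With $D_g \cap D_{g^{-1}}$ non-zero and $\alpha_g$ equal to the identity on it, the pair $(g, D_g \cap D_{g^{-1}})$ violates the defining condition of injectivity, so $\alpha$ is not injective, completing the contrapositive.
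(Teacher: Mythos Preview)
Your proposal is correct and follows essentially the same contrapositive route as the paper: from a non-identity $g$ with $\theta_g=\id_{X_{g^{-1}}}$ you deduce $X_g=X_{g^{-1}}$, hence $D_g=D_{g^{-1}}$ and $\alpha_g=\id$ there, contradicting injectivity. You are actually more careful than the paper in flagging the non-zeroness of $D_g\cap D_{g^{-1}}$ (equivalently $X_{g^{-1}}\neq\emptyset$); the paper's proof passes over this point silently, and under the paper's literal definition of faithfulness a vacuous $X_{g^{-1}}=\emptyset$ would indeed be a borderline case that neither argument fully addresses.
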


\begin{proof}
Suppose that $\theta$ is not faithful.
Then there is a non-identity $g \in G$
such that $\theta_g(x) = x$ for $x \in X_{g^{-1}}$.
This implies that $X_g = X_{g^{-1}}$
and thus that 
$D_g = D_{g^{-1}}$
and $\alpha_g (f) = f$, for $f \in D_{g^{-1}}$.
Thus, $\alpha$ is not injective.
\end{proof}

\begin{prop}\label{topologicallyfreeequivalentouterfinite}
If $\theta$ is free, 
then $\alpha$ is strongly outer.
\end{prop}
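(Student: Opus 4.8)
The plan is to argue by contradiction, exploiting the fact that multiplication in $F_0(X,B)$ is computed pointwise while $\alpha_g$ acts by translating supports via $\theta_g$. I first reduce the statement to a single assertion: it suffices to show that for every non-identity $g \in G$ and every non-zero idempotent $u \in F_0(X,B)$, the map $\alpha_g$ is \emph{outer} at $u$ in the sense of Definition \ref{cornerinner}, since this is exactly what strong outerness of $\alpha$ amounts to (Definition \ref{defnouter}). If $u \notin D_{g^{-1}}$, then $\alpha_g$ is automatically outer at $u$, because innerness at $u$ requires $u$ to lie in the domain $D_{g^{-1}}$ of $\alpha_g$. So I may assume $u \in D_{g^{-1}}$, that is, $\supp(u) \subseteq X_{g^{-1}}$. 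Recall that an idempotent $u$ of $F_0(X,B)$ is a finitely supported function with $u(x)$ idempotent in $B$ for each $x$, and $u \neq 0$ means $u(x_0) \neq 0$ for some $x_0 \in \supp(u)$.

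Next I assume, seeking a contradiction, that $\alpha_g$ is inner at $u$. By the remark following Definition \ref{cornerinner}, there then exist $a, b \in F_0(X,B)$ with $ab = u$, $ba = \alpha_g(u)$, and
\begin{displaymath}
	\alpha_g(f) = b f a \quad \text{for all } f \in u F_0(X,B) u.
\end{displaymath}
The crucial observation is that multiplication in $F_0(X,B)$ is pointwise, so $(bfa)(y) = b(y) f(y) a(y)$ and hence $\supp(bfa) \subseteq \supp(f)$; multiplication by $b$ and $a$ cannot move the support of $f$. By contrast, $\alpha_g$ does move supports: for $f$ supported in $X_{g^{-1}}$ one has $\alpha_g(f)(y) = f(\theta_{g^{-1}}(y))$, so $\supp(\alpha_g(f)) = \theta_g(\supp(f))$.

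I then exploit freeness at a single point. Fix $x_0 \in \supp(u) \subseteq X_{g^{-1}}$; since $\theta$ is free and $g$ is a non-identity element, $\theta_g(x_0) \neq x_0$. Consider the point-mass $f = u(x_0)_{x_0}$, using the notation $b_x$ of the earlier remark. Since $u(x_0)$ is idempotent, $u f u = f$, so $f \in u F_0(X,B) u$. Computing $\alpha_g(f)$ gives the function supported at $\theta_g(x_0)$ with value $u(x_0)$, whereas $bfa$ is supported in $\{x_0\}$. Evaluating the identity $\alpha_g(f) = bfa$ at the point $\theta_g(x_0)$ yields $u(x_0)$ on the left and $0$ on the right (because $\theta_g(x_0) \neq x_0$), contradicting $u(x_0) \neq 0$. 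Hence $\alpha_g$ is outer at $u$, and as $u$ and the non-identity $g$ were arbitrary, $\alpha$ is strongly outer. I do not anticipate a genuine obstacle here; the only points needing care are the trivial reduction for $u \notin D_{g^{-1}}$ and the bookkeeping that $f = u(x_0)_{x_0}$ really lies in the corner $u F_0(X,B) u$, after which the support mismatch forced by freeness closes the argument immediately.
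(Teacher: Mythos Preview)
Your proof is correct and follows essentially the same strategy as the paper's: both detect a support mismatch between the pointwise product $bfa$ (which cannot leave $\supp(f)$) and the translated function $\alpha_g(f)$ (whose support is $\theta_g(\supp(f))$), then invoke freeness at a single point $x_0\in\supp(u)$ to obtain a contradiction.

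The only difference is tactical. The paper first applies Proposition~\ref{proppartialorder} to pass from $u$ down to the point-mass idempotent $b_{x_0}=u(x_0)_{x_0}\leq u$, so that $\alpha_g$ is inner at $b_{x_0}$, and then evaluates the relation $a'b'=b_{x_0}$ (with $a'\in b_{x_0}A\alpha_g(b_{x_0})$) at the point $x_0$. You instead stay at the original idempotent $u$, use the defining relation $\alpha_g(f)=bfa$ for $f\in uF_0(X,B)u$, and plug in the point-mass $f=u(x_0)_{x_0}$ directly. Your route is marginally more self-contained since it avoids the appeal to Proposition~\ref{proppartialorder}; the paper's route illustrates how that proposition can absorb the reduction step. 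Either way the computation is the same.
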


\begin{proof}
Suppose that $\alpha$ is not strongly outer.
We show that $\theta$ is not free.
Choose a non-zero idempotent $u \in F_0(X,B)$ and a non-identity $g \in G$
such that $\alpha_g$ is inner at $u$.
Pick $x \in X$ such that $b=u(x)\neq 0$. Then $b_x \leq u$
in the sense of Definition \ref{partialorder}.
By Proposition \ref{proppartialorder}, we get that
$\alpha_g$ is inner at $b_x$.
In particular, there are $f,f' \in F_0(X,B)$
such that $b_x f \alpha_g(b_x) f' b_x = b_x$,
or equivalently $b_x f b_{\theta_g(x)} f' b_x = b_x$.
Therefore we get that
\begin{displaymath}
	b_x(x) f(x) b_{\theta_g(x)}(x) f'(x) b_x(x) = b_x(x) =b \neq 0
\end{displaymath}
from which it follows that $\theta_g(x)=x$.
This shows that $\theta$ is not free.
\end{proof}

\noindent
{\bf Proof of Theorem \ref{generalizationgoncalvesfinite}.}
(i)$\Rightarrow$(iii):
Suppose that $F_0(X,B) \star_\alpha G$ is simple.
Clearly $F_0(X,B) \star_\alpha G$ is graded simple
and hence, by Proposition \ref{SimpleImpliesGSimple},
we get that $F_0(X,B)$ is $G$-simple.
By Proposition \ref{minimalequivalentGsimplefinite},
we get that $\theta$ is minimal.
By Proposition \ref{simpleimpliesinjective} we conclude that $\alpha$ is injective
and hence, by Proposition \ref{faithfulequivalentinjectivefinite}, $\theta$ is faithful.

(iii)$\Rightarrow$(ii): This follows immediately from
Proposition \ref{faithfulminimalimpliesfree}.

(ii)$\Rightarrow$(i):
Suppose that $\theta$ is minimal and free.
By Proposition \ref{minimalequivalentGsimplefinite}
and Proposition \ref{topologicallyfreeequivalentouterfinite}, we get,
respectively, that $F_0(X,B)$ is $G$-simple and that $\alpha$ is strongly outer. 
Theorem \ref{generalizationcrow} implies that $F_0(X,B) \star_\alpha G$ is simple.
\qed

\section{An Application to Topological Dynamics}\label{Sec:TopDynamics}

At the end of this section, we use Theorem \ref{generalizationcrow}
to show Theorem \ref{generalizationgoncalves}.

\begin{assumption}
Throughout this section, 
$\theta$ denotes a partial action of a group $G$
on a topological space $X$, and the corresponding subsets of 
$X$ are denoted by $X_g$, for $g\in G$.
Let $B$ denote a simple associative topological
real algebra which has a set $E$ of local units. 
Let $C_E(X,B) = \cup_{\epsilon \in E} C(X,\epsilon B \epsilon)$
where
\begin{displaymath}
	C(X,\epsilon B \epsilon) = \{
\text{continuous } f : X \rightarrow \epsilon B \epsilon \}.
\end{displaymath}
We postulate that $B$ satisfies the following property:
\begin{itemize}

\item[(P)] There is a continuous map 
$q : B \rightarrow {\Bbb R}_{\geq 0}$
satisfying $q(b) > 0$, for non-zero $b \in B$,
and $(q \circ f) \epsilon_X \in I$ for every ideal
$I$ of $C_E(X,B)$ and every $f \in I \cap C(X, \epsilon B \epsilon)$.
\end{itemize}
\end{assumption}

\begin{rem}
If $E$ and $E'$ are sets of local units 
for $B$, then $C_E(X,B) = C_{E'}(X,B)$.
In particular, if $B$ is unital, then
$C_E(X,B) = C(X,B)$ and the postulate (P)
simplifies to 
\begin{itemize}

\item[(P1)] {\it There is a continuous map 
$q : B \rightarrow {\Bbb R}_{\geq 0}$
satisfying $q(b) > 0$, for non-zero $b \in B$,
and $q \circ f \in I$ for every ideal
$I$ of $C(X,B)$ and every $f \in I$.}

\end{itemize}
\end{rem}

Now we show that there are lots of rings $B$
which satisfy the postulate (P).

\begin{exmp}\label{K}
Suppose that $K$ denotes any of the unital rings 
of real numbers ${\Bbb R}$, complex numbers ${\Bbb C}$ 
or quaternions ${\Bbb H}$
equipped with their respective conjugation
$\overline{\cdot}$, norm $| \cdot |$ and topology.
Define $q : K \rightarrow {\Bbb R}_{\geq 0}$ by
$q(k) = k \overline{k} = |k|^2$.
Then, of course, $q(k) > 0$, for non-zero $k \in K$.
If $I$ is an ideal of $C(X,K)$, then 
$q \circ I \subseteq I \overline{I} \subseteq I$
so (P1) is satisfied.
\end{exmp}

\begin{exmp}\label{matrices}
Let $K$ be defined as in Example \ref{K}.
Let $n$ denote a positive integer and let
$B$ denote the unital ring $M_n(K)$ of $n \times n$
matrices over $K$.
Extend $\overline{ \cdot  }$ to
$B$ by elementwise conjugation.
For $1 \leq i,j \leq n$, let
$e_{ij}$ denote the matrix with $1$ in
the $ij$th position and $0$ elsewhere.
For a matrix $b = (a_{ij})$ in $B$
let $q(b) = \sum_{1 \leq i,j \leq n} | a_{ij} |^2$.
It is clear that $q$ is continuous as
a map $B \rightarrow {\Bbb R }$
and that $q(b) > 0$ for non-zero $b \in B$.
Let $I$ be an ideal of $C(X,B)$ and suppose that $f \in I$.
Then for every choice of $i,j \in \{ 1,\ldots,n \}$, there 
is a continuous map $f_{ij} : X \rightarrow B$
such that $f = \sum_{1 \leq i,j \leq n} f_{ij} e_{ij}$.
Therefore, we get that 
\begin{align*}
	q \circ f &=  \sum_{1 \leq i,j \leq n} |f_{ij}|^2 = 
\sum_{1 \leq i,j \leq n} f_{ij} \overline{f}_{ij} = \\
&= \sum_{1 \leq i,j,k \leq n} e_{ki} f e_{jk} \overline{f}_{ij} \in 
\sum_{1 \leq i,j,k \leq n} e_{ki} I e_{jk} \overline{f}_{ij}
\subseteq I
\end{align*}
and hence (P1) holds.
\end{exmp}

\begin{exmp}\label{definitionB}
Let $K$ be defined as in Example \ref{K}. 
Let $B = \cup_{n \in {\Bbb N}} M_n(K)$.
Note that if $m,n \in {\Bbb N}$ satisfy 
$m \leq n$, then we may consider $M_m(K) \subseteq M_n(K)$
in the classical way.
Namely, to each $(a_{ij}) \in M_m(K)$ we associate
$(a_{ij}') \in M_n(K)$ where
$a_{ij}' = a_{ij}$, if $1 \leq i,j \leq m$,
and $a_{ij}' = 0$, otherwise.
Then $B$ is a ring which has a set of local units
$E$ consisting of the matrices $\epsilon^{(n)} = \sum_{i=1}^n e_{ii}$,
for $n \in {\Bbb N}$.
Take $b \in B$. Then $b \in M_n(K)$, for some $n \in {\Bbb N}$. 
Define $q(b)$ as in Example \ref{matrices}.
It is clear that $q(b) > 0$ if $b$ is non-zero.
Take an ideal $I$ of $C_E(X,B)$ and $f \in I \cap C(X, \epsilon^{(n)} B \epsilon^{(n)})$,
for some $n \in {\Bbb N}$.
Then $f$ belongs to $\epsilon^{(n)}_X I \epsilon^{(n)}_X$ which is an
ideal in the unital ring $C(X, \epsilon^{(n)} B \epsilon^{(n)})$.
Hence, by Example \ref{matrices}, we get that
$(q \circ f)\epsilon^{(n)}_X \in \epsilon^{(n)}_X I \epsilon^{(n)}_X \subseteq I$.
Therefore, postulate (P) holds.
\end{exmp}

\begin{defn}\label{definitionspace}
For each $g \in G$, let $D_g$ denote the set of 
$f \in C_E(X,B)$ such that $f(x) = 0$
for all $x \in X \setminus X_g$.
It is clear that $D_g$ is an ideal of $C_E(X,B)$.
\end{defn}

\begin{rem}
The set of all $\epsilon_X$, for $\epsilon \in E$, 
is a set of local units for $C_E(X,B)$.
\end{rem}

\begin{prop}
If each $X_g$, for $g \in G$, is clopen, then the map 
\begin{displaymath}
	G \ni g \mapsto (\alpha_g : D_{g^{-1}} \rightarrow D_g),
\end{displaymath}
defined by $\alpha_g(f) = f \circ \theta_{g^{-1}}$,
for $f \in D_{g^{-1}}$,
defines a partial action of $G$ on $C(X,B)$.
\end{prop}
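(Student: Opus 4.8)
The plan is to verify that the assignment $g \mapsto \alpha_g$, with $\alpha_g(f) = f \circ \theta_{g^{-1}}$ on $D_{g^{-1}}$, satisfies the three defining conditions (i)--(iii) of a partial action on the ring $C_E(X,B)$. Before checking these, I must confirm that each $\alpha_g$ is a well-defined ring isomorphism $D_{g^{-1}} \to D_g$; this is precisely the step where the clopenness hypothesis enters. Since $\theta_{g^{-1}} : X_g \to X_{g^{-1}}$ is a homeomorphism (the topological version of a partial action, recalled in the introduction, requires each $\theta_g$ to be a homeomorphism of open sets), precomposition with $\theta_{g^{-1}}$ sends a continuous function on $X$ to a continuous function on $X_g$. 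The issue is that $f \circ \theta_{g^{-1}}$ is a priori only defined on $X_g$, and we must extend it by $0$ to all of $X$ while preserving continuity; this extension is continuous exactly because $X_g$ is clopen (so both $X_g$ and its complement are open, and the two continuous pieces agree on the empty overlap of these open sets). I would spell this out first.

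Next I would check that $\alpha_g$ actually lands in $D_g$ and is a ring homomorphism. If $f \in D_{g^{-1}}$ then $f$ vanishes off $X_{g^{-1}}$, so $f \circ \theta_{g^{-1}}$ vanishes off $\theta_g(X_{g^{-1}} \cap X_{g^{-1}}) = X_g$, placing $\alpha_g(f) \in D_g$; one also checks the support is finite-dimensional-valued into the correct corner $\epsilon B \epsilon$, so $\alpha_g(f) \in C_E(X,B)$. Since the ring operations on $C_E(X,B)$ are pointwise and precomposition respects pointwise operations, $\alpha_g$ is a ring homomorphism. Bijectivity is immediate: $\alpha_{g^{-1}}$ is its two-sided inverse because $\theta_{g^{-1}} \circ \theta_g = \id$ on the relevant domain. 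I would then record condition (i), $\alpha_e = \id$, which follows from $\theta_e = \id_X$.

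For condition (iii), the cocycle relation $\alpha_g(\alpha_h(f)) = \alpha_{gh}(f)$ reduces pointwise to the cocycle relation $\theta_g \circ \theta_h = \theta_{gh}$ for $\theta$ on the appropriate common domain, so it transfers directly. Condition (ii), $\alpha_g(D_{g^{-1}} \cap D_h) = D_g \cap D_{gh}$, I would obtain by translating the inclusion $\theta_g(X_{g^{-1}} \cap X_h) = X_g \cap X_{gh}$ from condition (ii) for $\theta$ into the language of vanishing sets: $f \in D_{g^{-1}} \cap D_h$ means $f$ is supported in $X_{g^{-1}} \cap X_h$, and precomposing with $\theta_{g^{-1}}$ moves the support to $X_g \cap X_{gh}$. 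The main obstacle, such as it is, is the continuity of the zero-extension; everything else is a faithful pointwise translation of the set-theoretic partial action $\theta$ into the function algebra, and the clopenness assumption is exactly what guarantees that the translated maps remain continuous and hence land in $C_E(X,B)$.
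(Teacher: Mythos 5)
Your proposal is correct and takes essentially the same approach as the paper: the paper likewise reduces the whole statement to the well-definedness of $\alpha_g$, namely the continuity of the extension of $f \circ \theta_{g^{-1}}$ by zero, and establishes this from clopenness of $X_g$ via a two-case analysis of preimages of open balls, which is just the explicit form of the open-cover gluing argument you invoke. The remaining verifications (ring homomorphism, bijectivity, and the partial action axioms (i)--(iii)), which you spell out, are treated as routine and left implicit in the paper.
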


\begin{proof}
All we need to show is that $\alpha_g$ is well-defined.
Take $f \in D_{g^{-1}}$. We need to show that
the map $h : X \rightarrow B$ defined by
$h(x) = f( \theta_{g^{-1}}(x) )$, for $x \in X_g$,
and $h(x) = 0$, for $x \in X \setminus X_g$,
is continuous. Suppose that $U$ is an open ball
in $B$. We now consider two cases.

Case 1: $0 \notin U$. Then 
$h^{-1}(U) = (f \circ \theta_{g^{-1}})^{-1}(U)$
which is open in $X_g$ and hence is open in $X$.

Case 2: $0 \in U$. Then 
$h^{-1}(U) = (f \circ \theta_{g^{-1}})^{-1}(U) 
\cup (X \setminus X_g)$ which, by Case 1 and
the fact that $X_g$ is clopen, is open in $X$.
\end{proof}

\begin{prop}\label{minimalequivalentGsimple}
If $X$ is compact Hausdorff and each $X_g$, for $g \in G$, is clopen, 
then $\theta$ is topologically minimal if and only if $C(X,B)$ is a $G$-simple ring. 
\end{prop}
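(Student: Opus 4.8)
The plan is to prove Proposition \ref{minimalequivalentGsimple}, which asserts that under the hypotheses ($X$ compact Hausdorff, each $X_g$ clopen), topological minimality of $\theta$ is equivalent to $G$-simplicity of $C(X,B)$. The structure will closely parallel the set-dynamical analogue in Proposition \ref{minimalequivalentGsimplefinite}, but with the finite-support machinery replaced by topological arguments. I will prove both directions by contraposition.

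For the ``only if'' direction (topological minimality $\Rightarrow$ $G$-simple), I will suppose that $C_E(X,B)$ admits a non-trivial $G$-invariant ideal $I$ and produce a non-trivial $G$-invariant \emph{closed} subset of $X$, contradicting topological minimality. The natural candidate is $N_I = \bigcap_{f \in I} f^{-1}(\{0\})$, the common zero set of all functions in $I$. First I would verify that $N_I$ is closed (as an intersection of closed sets, using continuity of each $f$) and $G$-invariant (inherited from $G$-invariance of $I$, exactly as in the proof of Proposition \ref{minimalequivalentGsimplefinite}). Since $I$ is non-zero, $N_I \neq X$. The crux is to show $N_I \neq \emptyset$: if $N_I$ were empty, I want to conclude $I = C_E(X,B)$, which is where the postulate (P) enters. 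Given $x \notin N_I$, there is $f \in I$ with $f(x) \neq 0$; applying (P) produces the non-negative real-valued function $(q \circ f)\epsilon_X \in I$, which is strictly positive at $x$. Covering the compact space $X$ by finitely many such open sets $\{y : (q\circ f_i)(y) > 0\}$ and summing, I would obtain an element of $I$ that is strictly positive everywhere on $X$, hence (being valued in a suitable corner) invertible in some unital corner $C(X,\epsilon B \epsilon)$, forcing $I$ to contain a local unit and therefore $I = C_E(X,B)$. This compactness-plus-partition argument is the step I expect to require the most care, since one must manage the local-unit structure and the passage between different corners $\epsilon B \epsilon$.

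For the ``if'' direction I will again argue contrapositively: assume $\theta$ is not topologically minimal, so there is a non-trivial closed $G$-invariant subset $Y \subsetneq X$ with $Y \neq \emptyset$. I would then define $I_Y$ to be the ideal of $C_E(X,B)$ consisting of all $f$ vanishing on $Y$. That $I_Y$ is $G$-invariant follows from $G$-invariance of $Y$ together with the definition $\alpha_g(f) = f \circ \theta_{g^{-1}}$, mirroring the argument for $I_Y$ in Proposition \ref{minimalequivalentGsimplefinite}. To see that $I_Y$ is a \emph{proper non-zero} ideal, I would use that $Y \neq X$ and $Y \neq \emptyset$ together with the Hausdorff and clopen hypotheses: because $X$ is compact Hausdorff (hence normal) I can separate a point of $X \setminus Y$ from the closed set $Y$ by a continuous real function, then multiply by a local unit $\epsilon$ to land inside some corner $C(X, \epsilon B \epsilon)$, producing a non-zero element of $I_Y$; and $I_Y \neq C_E(X,B)$ because any function in $I_Y$ vanishes on the non-empty $Y$. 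This yields a non-trivial $G$-invariant ideal, so $C_E(X,B)$ is not $G$-simple, completing the contrapositive.

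The main obstacle, as indicated, is the emptiness argument for $N_I$ in the forward direction: unlike the finite-support case where one can work pointwise with a single $b_x$ and exploit simplicity of $B$ directly, here I must glue local positivity into a global invertible element using compactness, and I must do so respecting the directed-union structure $C_E(X,B) = \bigcup_\epsilon C(X,\epsilon B\epsilon)$ so that the finitely many functions involved all sit inside a common corner $C(X, \epsilon B \epsilon)$ for a single idempotent $\epsilon$. Property (P) is precisely the device that converts a non-vanishing element into a non-negative scalar-like element lying in $I$, and the simplicity of $B$ (as in Proposition \ref{minimalequivalentGsimplefinite}) will be needed to upgrade a single non-zero value into full containment of the relevant corner.
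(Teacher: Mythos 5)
Your proposal is correct and follows essentially the same route as the paper's proof: both directions are argued by contraposition, using the zero-set $N_I=\bigcap_{f\in I}f^{-1}(\{0\})$ (closed, proper, $G$-invariant, and shown non-empty via compactness, placing the finitely many functions in a common corner $C(X,\epsilon' B\epsilon')$, applying (P) there, and inverting the resulting strictly positive element in that unital corner) in one direction, and the vanishing ideal $I_Y$ (non-zero by complete regularity/Urysohn, proper via non-vanishing constants) in the other. The only minor deviation is the final step of the forward direction: the paper lets $\epsilon$ range over all of $E$ and shows $\epsilon_X\in I$ for \emph{every} local unit (by choosing $\epsilon'$ above both $\epsilon$ and the finitely many $f_i$), whereas you place a single local unit $\epsilon'_X$ in $I$ and must then invoke simplicity of $B$ to conclude $I=C_E(X,B)$ --- which does work, as you indicate.
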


\begin{proof}
Suppose that $C(X,B)$ is not $G$-simple.
There is a non-trivial  
$G$-invariant ideal $I$ of $C(X,B)$.
For a subset $J$ of $I$,
let $N_J$ be the set 
$\bigcap_{f \in J} f^{-1}( \{ 0 \} )$.
We claim that $N_I$ is a closed, non-empty
proper $G$-invariant subset of $X$.
If we assume that the claim holds
then $\theta$ is not minimal.
Now we show the claim.
Since $I$ is $G$-invariant
the same is true for $N_I$.
Since $I$ is non-zero it follows that $N_I$
is a proper subset of $X$.
Since each set $f^{-1}( \{ 0 \} )$, for $f \in I$,
is closed, the same is true for $N_I$.
Seeking a contradiction, suppose that $N_I$ is empty.
Since $X$ is compact,
there is a finite subset $J$ of $I$ such that
$N_J = N_I = \emptyset$. 
Take an arbitrary non-zero local unit $\epsilon$ in $B$.
Take another non-zero local unit $\epsilon'$ in $B$
such that $\epsilon \epsilon' = \epsilon' \epsilon = \epsilon$
and $f \in C(X,\epsilon' B \epsilon')$, for all $f \in J$.
Now define $g \in I$
by $g = \sum_{f \in J} (q \circ f) \epsilon'_X$.
Since $N_J$ is empty, we get that 
$\sum_{f \in J} (q \circ f)(x) > 0$
for all $x \in X$.
Therefore, we get that $g$ is invertible
in the ring $\epsilon'_X C(X,B) \epsilon'_X$
which in turn implies that
$\epsilon_X' \in I$.
Hence $\epsilon_X = \epsilon_X \epsilon_X' \in I$.
Since $\epsilon$ was arbitrarily chosen,
we get that $I = C(X,B)$ which is a contradiction
and therefore $N_I$ is non-empty.

Now suppose that $\theta$ is not minimal.
We show that $C(X,B)$ is not $G$-simple.
Let $Y$ be a non-trivial closed $G$-invariant subset of $X$.
Let $I_Y$ denote the ideal of $C(X,B)$ 
consisting of all $f \in C(X,B)$
that vanish on $Y$. 
Since $Y$ is $G$-invariant it follows that $I_Y$ 
is $G$-invariant.
Now we show that $I_Y$ is non-zero.
Suppose that $\epsilon$ is a non-zero local unit in $B$. 
Since $X$ is compact Hausdorff it
is completely regular. 
Hence there is a non-zero continuous
$f : X \rightarrow {\Bbb R}$ such that
$f|_Y = 0$. Define a continuous 
$\tilde{f} : X \rightarrow B$ by 
$\tilde{f}(x) = f(x) \epsilon$, for $x \in X$.
Then $\tilde{f} \in I_Y$ and therefore $I_Y \neq \{ 0 \}$.
Also, $I_Y \neq C(X,B)$. In fact, 
for every non-zero $b \in B$, the
constant function $b_X \in C(X,B) \setminus \{ I_Y \}$.
Thus, $C(X,B)$ is not $G$-simple.
\end{proof}

\begin{prop}\label{topologicallyfreeequivalentouter}
Suppose that $X$ is compact Hausdorff and
each $X_g$, for $g \in G$, is clopen.
If $\theta$ is topologically free, 
and $\epsilon \in E \setminus \{ 0 \}$,
then $\alpha$ is outer at $\epsilon_X$.
\end{prop}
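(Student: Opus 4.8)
The plan is to prove the contrapositive: assuming $\alpha$ is \emph{not} outer at $\epsilon_X$, I will produce a non-identity $g \in G$ and a point $x \in X_{g^{-1}}$ with $\theta_g(x) = x$ lying in the interior of the fixed-point set, thereby contradicting topological freeness. This mirrors the structure of Proposition \ref{topologicallyfreeequivalentouterfinite} from the set-dynamical case, but now the continuity of functions and the clopenness of the $X_g$ must be used to extract a whole open set of fixed points rather than a single one.

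\textbf{First I would} unwind the failure of outerness at $\epsilon_X$: there is a non-identity $g \in G$ such that $\alpha_g$ is inner at $\epsilon_X$. By Definition \ref{cornerinner}, this furnishes elements $a,b \in C_E(X,B)$ with $ab = \epsilon_X$, $ba = \alpha_g(\epsilon_X)$, and $\alpha_g(f) = bfa$ for all $f \in \epsilon_X C_E(X,B) \epsilon_X$. Since $\alpha_g(f) = f \circ \theta_{g^{-1}}$, evaluating this identity pointwise at a point $y \in X_g$ gives $f(\theta_{g^{-1}}(y)) = b(y) f(y) a(y)$ for every such local function $f$; pulling back along $\theta_g$, for $x \in X_{g^{-1}}$ this reads $f(x) = b(\theta_g(x))\, f(\theta_g(x))\, a(\theta_g(x))$. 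The goal is to argue that on a non-empty open subset of $X_{g^{-1}}$ one must have $\theta_g(x) = x$.

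\textbf{The core step} is to exploit that the inner-at-$\epsilon_X$ relation holds simultaneously for \emph{all} local functions $f$, not just one. Fix a point $x_0$ where $a,b$ are non-degenerate (so that the conjugation $c \mapsto b(\theta_g(x))\, c\, a(\theta_g(x))$ is a genuine isomorphism $\epsilon B \epsilon \to \epsilon B \epsilon$ near $x_0$), and suppose for contradiction that $\theta_g(x_0) \neq x_0$. Using that $X$ is Hausdorff, I would separate $x_0$ and $\theta_g(x_0)$ by disjoint open sets and build, via complete regularity of the compact Hausdorff space $X$, a continuous local function $f$ supported near $x_0$ but vanishing near $\theta_g(x_0)$. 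Feeding such an $f$ into $f(x) = b(\theta_g(x)) f(\theta_g(x)) a(\theta_g(x))$ at $x = x_0$ forces $f(x_0) = 0$, contradicting the choice of $f$. Hence $\theta_g$ fixes $x_0$; running this argument on a neighbourhood shows the fixed-point set of $\theta_g$ contains a non-empty open set, contradicting topological freeness. The clopenness of $X_g$ is what guarantees the relevant sets stay inside the domain of definition and that the separating functions can be extended by zero while remaining continuous and landing in $C_E(X,B)$.

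\textbf{The hard part will be} the passage from the single algebraic identity to an honest open set of fixed points, i.e. making precise that innerness at $\epsilon_X$ is strong enough to pin down $\theta_g$ on an open set and not merely at isolated points. The delicacy is that $B$ is only a ring with local units and a separating map $q$, so I cannot directly invoke scalar-valued separation; instead I would compose with $q$ (property (P)) to reduce to the real-valued separation guaranteed by complete regularity, and then argue that the conjugation relation, which holds as an identity of $\epsilon B \epsilon$-valued functions, cannot be satisfied by suitably chosen bump functions unless $\theta_g(x) = x$ throughout an open set. Handling the local-unit bookkeeping (ensuring $a,b$ and the test functions all live in a common corner $\epsilon' B \epsilon'$) is the main technical nuisance, but it is routine once the separation argument is set up.
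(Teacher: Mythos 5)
Your proposal is correct and is essentially the paper's own argument: assume $\alpha_g$ is inner at $\epsilon_X$ for some non-identity $g$, and feed real-scalar test functions $r\epsilon_X$, $r \in C(X,\mathbb{R})$ (available by complete regularity of the compact Hausdorff space $X$), into the conjugation identity to force $\theta_g$ to fix every point, contradicting topological freeness --- the paper does this in one stroke by inserting $h = r\epsilon_X$, deriving $r \circ \theta_{g^{-1}} = r$ for all such $r$, and invoking Urysohn's lemma to conclude $\theta_g = \id_X$. Two of the difficulties you anticipate are in fact non-issues: the ``non-degeneracy'' of $a,b$ at $x_0$ is never used (the contradiction $f(x_0) = b(\theta_g(x_0))\,f(\theta_g(x_0))\,a(\theta_g(x_0)) = 0$ needs nothing about $a,b$, so the bump-function argument works at every point of $X_{g^{-1}}$), and property (P) and the map $q$ play no role here, since $B$ is a real algebra and the bump functions $r\epsilon_X$ already lie in $\epsilon_X C_E(X,B)\epsilon_X$.
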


\begin{proof}
Suppose that $\alpha$ is not outer at $\epsilon_X$.
We show that $\theta$ is not topologically free.
Choose a non-identity $g \in G$
such that $\alpha_g$ is inner at $\epsilon_X$.
This implies in particular that
$\epsilon_X \in D_g \cap D_{g^{-1}}$
and thus $X_g = X_{g^{-1}} = X$.
Therefore there are $f,f' \in C(X,B)$ such that 
$\epsilon_X f \alpha_g(\epsilon_X) f' \epsilon_X = \epsilon_X$ and
$\alpha_g(\epsilon_X) f' \epsilon_X f \alpha_g(\epsilon_X) = \alpha_g(\epsilon_X)$ and
$\alpha_g( \epsilon_X h \epsilon_X ) = f' h f$
for all $h \in C(X,B)$.
In particular, if we insert $h = r \epsilon_X$,
where $r \in C(X,{\Bbb R})$,
into the last equation, then we get that
$r \circ \theta_{g^{-1}} = r$
which, in turn, by Urysohn's lemma, 
implies that $\theta_g = \id_X$.
Thus, $\theta$ is not topologically free.
\end{proof}

\noindent
{\bf Proof of Theorem \ref{generalizationgoncalves}.}
(i)$\Rightarrow$(iii):
Suppose that $C_E(X,B) \star_\alpha G$ is simple.
Clearly, $C_E(X,B) \star_\alpha G$ is graded simple
and hence, by Proposition \ref{SimpleImpliesGSimple},
we get that $C_E(X,B)$ is $G$-simple.
By Proposition \ref{minimalequivalentGsimple},
we get that $\theta$ is topologically minimal.
By Proposition \ref{simpleimpliesinjective} we conclude that $\alpha$ is injective
and hence, by Proposition 
\ref{faithfulequivalentinjectivefinite}, $\theta$ is faithful.

(iii)$\Rightarrow$(ii): This follows immediately from 
Proposition \ref{faithfulminimalimpliesfree}.

(ii)$\Rightarrow$(i):
Suppose that $\theta$ is topologically minimal and topologically free.
Take any non-zero $\epsilon \in E$.
By Proposition \ref{minimalequivalentGsimple}
and Proposition \ref{topologicallyfreeequivalentouter}, we get,
respectively, that $C_E(X,B)$ is $G$-simple and that $\alpha$ is outer at $\epsilon_X$. 
Theorem \ref{generalizationcrow} implies that $C_E(X,B) \star_\alpha G$ is simple.
\qed

\proof[Acknowledgements]
The authors are grateful to an anonymous referee
for having provided them with valuable feedback and useful comments.

\end{document}